\newtheorem{theo}{Theorem}[section]
\newtheorem{Lemma}[theo]{Lemma}
\newtheorem{FundLem}[theo]{Fundamental Lemma}
\newtheorem{Prop}[theo]{Proposition}
\newtheorem{Rm}[theo]{Remark}
\newtheorem{Ex}[theo]{Example}
\newtheorem*{tma}{Theorem}
\newtheorem*{tma1}{Theorem \ref{maior}}
\newtheorem*{notation}{Notation}
\newtheorem{pps0}[theo]{Proposition}
\newcommand{\lra}{ \longrightarrow }
\renewcommand{\Im}{{\rm Im}}
\newcommand{\Sym}{{\rm Sym}}
\newcommand{\Fix}{{\rm Fix}}
\newcommand{\Supp}{{\rm Supp}}
\author{Natalia A. Viana Bedoya,\  Daciberg Lima Gon\c calves and\\ Elena A. Kudryavtseva }
\title{Indecomposable branched coverings over the projective plane by surfaces $M$ with $\chi(M)\leq 0$}
\begin{document}
\maketitle

\begin{abstract} In this work we study the decomposability property of  branched \hspace*{-3pt} coverings 
of degree $d$ odd,
over the projective plane, where the covering surface has Euler characteristic $\leq 0$. The latter condition is equivalent to
say that the  defect of the covering is greater than  $d$.
  We  show that, given a   datum $\mathscr{D}=\{D_{1},\dots,D_{s}\}$ with an even defect  greater than  $d$, it is realizable by
  an indecomposable branched covering over the projective plane.
  The case when $d$ is even is known.

\textbf{Key words:} {branched coverings, primitive groups, Hurwitz problem, permutation groups, projective plane.}
\end{abstract}

\section*{Introduction}

 A branched covering $\phi: M \lra N$ of degree $d$ between closed  surfaces determines
a finite collection  $\mathscr{D}$ of  partitions of $d$, {the branch
 datum of degree $d$}, in $1-1$ correspondence  with the {branch point set} $B_{\phi} \subset N$. The {\it total defect} of $\mathscr{D}$ is defined by
 $\nu(\mathscr{D})=\sum_{x \in B_{\phi}}(d-\# \phi^{-1}(x))$.
 Given a collection of partitions $\mathscr{D}$ of $d$ and $N \neq S^{2},\mathbb RP^2$, there are  necessary and sufficient conditions on $\mathscr{D}$ 
 to realize it  as branch datum of a branched
 covering of degree $d$ over $N$ with a connected covering surface, for more details see \cite{EKS}, \cite{Ez}, \cite{Hu}.
Such collections  are called either
{\it admissible data} if $\chi(N)\le0$, or {\it nonorientable-admissible} if $N=\mathbb{R}P^{2}$ and the covering surface is nonorientable.
Due to \cite {EKS}, these conditions are either
$\nu(\mathscr{D})\equiv0\pmod2$ if $\chi(N)\le0$, or
$
d-1\le\nu(\mathscr{D})\equiv0\pmod2
$
if $N=\mathbb{R}P^{2}$ and the covering surface is nonorientable,
 see (\ref {hcpp}) for $N=\mathbb{R}P^{2}$.

 Decomposability properties  of branched coverings between surfaces provide
 three classes of admissible data: those data that are realizable only by decomposable branched coverings, those that are realizable only by
 indecomposable branched coverings and those that are realizable by both, decomposable and indecomposable branched coverings. A
 datum realizable by
 a decomposable primitive  branched covering over $N$ with a connected covering surface
  is  called  {\it decomposable primitive datum on $N$}.
 A characterization  of the
 admissible data which are realizable by decomposable primitive branched coverings over $N\ne S^2$
 is known. This characterization
follows from  \cite[Proposition 2.6]{BN&GDL1}  for $\chi(N)\leq 0$, and \cite{BN0} for $N=\mathbb{R}P^{2}$.
The proof for $\chi(N)=1$, which is in \cite{BN0},
is similar to the proof of   Proposition 2.6 in   \cite{BN&GDL1}.

\begin{pps0}[Proposition 2.6 \cite{BN&GDL1}]\label{geral}
Admissible data $\mathscr{D}$ are decomposable primitive on $N, $ with $\chi(N)\leq 0$,
 if and only if there exists a factorization of $\mathscr{D}$
  such that its left factor is a non-trivial  admissible datum. \qed
\end{pps0}

 The question of realization by indecomposable branched coverings of an admissible   datum is interesting
for decomposable data, otherwise clearly the problem has a positive  solution.
This question  has been completely solved in  \cite{BN&GDL1} for the case where $N$ is  a closed surface with $\chi(N)\leq 0$. Namely:

  \begin{tma}[Theorem 3.3 \cite{BN&GDL1}]
Every non-trivial  admissible data are realized on any $N$, with $\chi(N)\leq 0$, by an
indecomposable (and hence primitive) branched covering. \qed
\end{tma}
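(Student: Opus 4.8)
The plan is to pass to the monodromy description and to exploit the freedom coming from $\chi(N)\le 0$. Realizing $\mathscr{D}$ by a branched covering over $N$ with connected total space amounts to giving a transitive homomorphism $\rho:\pi_1(N\setminus B_\phi)\to S_d$ whose local monodromies $\sigma_j=\rho(x_j)$ around the branch points have cycle types $D_1,\dots,D_s$. Under this dictionary the covering is indecomposable exactly when the image $G=\rho(\pi_1(N\setminus B_\phi))$ is a primitive permutation group, since proper subgroups strictly between a point--stabiliser and $G$ correspond bijectively to non--trivial factorizations of the covering; this is precisely the ``(and hence primitive)'' of the statement. Thus it suffices to realize $\mathscr{D}$ so that $G$ is primitive.

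Recall that a closed surface with at least one puncture has free fundamental group, with presentation $\prod_{i=1}^g[a_i,b_i]\,x_1\cdots x_s=1$ in the orientable case and $\prod_{i=1}^k c_i^2\,x_1\cdots x_s=1$ in the nonorientable case; because $\chi(N)\le 0$ we have $g\ge 1$ (resp. $k\ge 2$), so there are genus generators $a_i,b_i$ (resp. $c_i$) that are \emph{not} loops around branch points, and I would use these as free parameters. Solving the relation for the dependent branch loop, the only arithmetic constraint is the parity of $\tau:=(\sigma_1\cdots\sigma_s)^{-1}$: since $\operatorname{sgn}(\sigma_j)=(-1)^{d-\ell(D_j)}$ with $\ell(D_j)=\#\phi^{-1}(b_j)$, one gets $\operatorname{sgn}(\tau)=(-1)^{\nu(\mathscr{D})}$, so admissibility $\nu(\mathscr{D})\equiv 0\pmod 2$ says exactly that $\tau\in A_d$. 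As products of commutators and of squares both take values in $A_d$ and, for the relevant number of factors, realize every element of $A_d$, the surface relation is solvable in $S_d$ precisely when $\tau\in A_d$; this recovers the Edmonds--Kulkarni--Stong realizability and, crucially, leaves the genus generators essentially free.

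The core of the argument is to use this freedom to force primitivity, most directly by arranging $G$ to contain $A_d$, after which $G\in\{A_d,S_d\}$ is automatically primitive for $d\ge 3$. Concretely, I would choose the placement of the $\sigma_j$ and the genus generators so that $G$ contains a transposition or a $3$--cycle together with a long cycle, and then invoke Jordan's theorem (a primitive group containing a $p$--cycle with $p\le d-3$, or a transposition, contains $A_d$) to conclude $A_d\le G$; equivalently, one breaks every candidate $G$--invariant partition by making at least one generator fail to preserve it. The surface relation is kept satisfied by absorbing the required correction into a single commutator $[a_1,b_1]$ (resp. a product $c_1^2c_2^2$), using that every element of $A_d$ is a commutator, and a product of two squares, in $S_d$ for $d\ge 5$, with small degrees handled directly. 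Since modifying the genus generators does not alter the cycle types of the $\sigma_j$, the branch datum $\mathscr{D}$ is preserved throughout.

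The main obstacle is the rigidity of the branch datum: the cycle types $D_1,\dots,D_s$ are fixed, so all the slack must come from the genus generators and from how the supports of the $\sigma_j$ are interlocked, and one must verify that this slack always suffices to reach a primitive group. The delicate cases are the degenerate ones---small degree $d$, minimal genus ($g=1$ or $k=2$), and data for which $\tau$ is forced to be trivial or very special (so that $[a_1,b_1]$, resp. $c_1^2c_2^2$, cannot by itself generate much)---where containment of $A_d$ must be established by an explicit construction together with a separate check that no non--trivial block system survives. Organizing this case analysis, and showing uniformly that non--triviality of $\mathscr{D}$ (equivalently $\nu(\mathscr{D})>0$) together with $\chi(N)\le 0$ always provides enough room to destroy every invariant partition, is where the real work lies.
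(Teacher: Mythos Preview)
The present paper does not contain a proof of this statement: it is Theorem~3.3 of \cite{BN&GDL1}, quoted here with a closing \qed\ and no argument. So there is no ``paper's own proof'' to compare against directly. What the paper does reveal about the method of \cite{BN&GDL1} is Lemma~\ref{cor} (Case~(3) of Theorem~3.2 there): given an even permutation $\alpha$ with $\nu(\alpha)<d-1$ and a mild nondegeneracy hypothesis, one can produce a $d$-cycle $\beta$ such that $\alpha\beta$ is again a $d$-cycle and $\langle\alpha,\beta\rangle$ is already primitive. This indicates that the approach in \cite{BN&GDL1} is constructive and local to block-combinatorics: one manufactures a $d$-cycle among the genus generators (a $d$-cycle has only cyclic-group block systems) and then checks that the second carefully chosen generator destroys every such system, rather than aiming for containment of $A_d$.

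Your proposal is a strategy outline rather than a proof, and you say so yourself (``organizing this case analysis \dots\ is where the real work lies''). The plan---use the freedom in the genus generators to absorb the product $\tau=(\sigma_1\cdots\sigma_s)^{-1}$ as a commutator or a product of squares, and simultaneously force $A_d\le G$ via Jordan's theorem---is plausible in broad strokes and genuinely different from the $d$-cycle mechanism suggested by Lemma~\ref{cor}. But several of the load-bearing claims are asserted without justification: that every element of $A_d$ is a product of \emph{two} squares in $\Sigma_d$ (needed for the Klein-bottle case $k=2$); that one can choose $a_1,b_1$ (resp.\ $c_1,c_2$) realizing a \emph{prescribed} commutator (resp.\ product of squares) while simultaneously contributing a $3$-cycle or transposition and a long cycle to $G$; and that transitivity survives all of these choices when the $\sigma_j$ alone may fail to be transitive. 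None of these is obvious, and the last is exactly the kind of interlocking-of-supports argument that \cite{BN&GDL1} handles by the explicit $d$-cycle construction rather than by a Jordan-type shortcut. As written, the proposal identifies the right dictionary (monodromy, primitivity, parity) and a reasonable target, but it does not yet contain a proof.
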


In this case, there does not exist an admissible datum realizable only by decomposable branched coverings, therefore decomposable and
indecomposable realizations coexist (whenever a decomposable realization exists) for the same data.

 It remains  to study  the  ``indecomposability''  of branched coverings between surfaces
where  $\chi(N)=1$, i.e. $N=\mathbb{R}P^{2}$, since we are not considering the case $N=S^2$.  So let $M \to \mathbb{R}P^{2}$ be a branched covering of degree $d$.
The even degree case, i.e. when $d$ is even, has been solved in \cite{BN&GDL2}. The result is:

  \begin{tma}[Theorem 3.7 \cite{BN&GDL2}]\label{even}
 Let $d$ be even and $\mathscr{D}=\{D_{1},\dots,D_{s}\}$ a nonorientable-admissible datum (see (\ref {hcpp}) or theorem \ref {realization})
 such that $s>0$ and $D_i\ne[1,\dots,1]$ for any $i\in \{1,\dots,s\}$. Then $\mathscr{D}$ is realizable by an indecomposable
   branched covering over $\mathbb{R}P^{2}$ if and only if
  at least one of the following conditions holds:\\
(1) $d=2$, or\\
 (2) there is  $i \in \{1,\dots,s\}$ such that
   $D_{i} \neq [2,\dots,2]$, or\\
(3) $d>4$ and $s>2$.\qed
 \end{tma}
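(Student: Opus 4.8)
The plan is to pass to monodromy and then prove the two implications separately. A degree-$d$ branched covering $\phi\colon M\lra\mathbb{R}P^{2}$ realizing $\mathscr{D}=\{D_{1},\dots,D_{s}\}$ is the same as a homomorphism $\rho\colon\pi_{1}(\mathbb{R}P^{2}\setminus B_{\phi})\to S_{d}$, where $S_{d}$ is the symmetric group on $d$ letters, determined by the image $g=\rho(\alpha)$ of the nonorientable generator $\alpha$ and the images $\sigma_{1},\dots,\sigma_{s}$ of small loops around the branch points. These are subject to: (i) $\sigma_{i}$ has cycle type $D_{i}$; (ii) the surface relation $g^{2}\sigma_{1}\cdots\sigma_{s}=1$; (iii) $G:=\langle g,\sigma_{1},\dots,\sigma_{s}\rangle$ is transitive, so $M$ is connected; and (iv) $g$ has at least one odd cycle, so $M$ is nonorientable. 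The covering is indecomposable exactly when $G$ acts primitively. Note that admissibility, $\nu(\mathscr{D})\equiv0\pmod2$, is precisely the statement that $\sigma_{1}\cdots\sigma_{s}$ is an even permutation, a condition necessary for (ii) since $g^{-2}$ is a square. Thus I must decide for which $\mathscr{D}$ there exists a primitive $G$ satisfying (i)--(iv), and show this is possible if and only if one of (1)--(3) holds.

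For necessity I assume none of (1)--(3) holds, i.e. $d\ge4$, every $D_{i}=[2,\dots,2]$, and $d\le4$ or $s\le2$, and I show $G$ is always imprimitive; here each $\sigma_{i}$ is a fixed-point-free involution. If $s\le2$ then in fact $s=2$, because $s=1$ with $D_{1}=[2,\dots,2]$ gives $\nu=d/2<d-1$ for $d\ge4$ and so is not nonorientable-admissible. With $s=2$ the relation reads $g^{2}=(\sigma_{1}\sigma_{2})^{-1}$, and since $\sigma_{1},\sigma_{2}$ are involutions one checks $\sigma_{i}\,g^{2}\,\sigma_{i}=g^{-2}$, so $\langle g^{2}\rangle\trianglelefteq G$. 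Its orbits form a $G$-invariant partition with blocks of equal size; as $d$ is even, $g^{2}$ is never a single $d$-cycle, so this partition is nontrivial unless $g^{2}=1$. When $g^{2}=1$ the group $G=\langle g,\sigma_{1}\rangle$ is dihedral, the cyclic subgroup generated by $g\sigma_{1}$ is normal, and for $d\ge6$ even its orbits give a nontrivial block system. If instead $d=4$ (any $s$), all $\sigma_{i}$ lie in the Klein four-group $V\trianglelefteq S_{4}$, so $g^{2}\in V$ and the image of $g$ in $S_{4}/V\cong S_{3}$ has order at most $2$; therefore $G\le\langle g,V\rangle$, whose index over $V$ is at most $2$, so $G$ is neither $A_{4}$ nor $S_{4}$, and since these are the only primitive subgroups of $S_{4}$, $G$ is imprimitive. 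In all cases no primitive realization exists.

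For sufficiency I construct a realization with primitive $G$ in each case. In case (1), $d=2$: any transitive subgroup of $S_{2}$ is primitive, admissibility forces $s$ even, and $g=1$, $\sigma_{1}=\dots=\sigma_{s}=(12)$ works. In cases (2) and (3) the aim is to choose the $\sigma_{i}$ of the prescribed types and a $g$ with an odd cycle so that (ii) holds and $G=A_{d}$ or $S_{d}$, both primitive. In case (2) some $D_{i}$ has a part different from $2$---a fixed point or a part $\ge3$---which supplies a generator outside the fixed-point-free-involution regime; after arranging transitivity this lets one place in $G$ a transposition, a $3$-cycle, or a cycle of length coprime to $d$, whence a standard generation criterion identifies $G$ as $A_{d}$ or $S_{d}$. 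In case (3), where all $D_{i}=[2,\dots,2]$ but $d\ge6$ and $s\ge3$, I would fix a convenient $g$ with an odd cycle, set $w=g^{-2}$ (an even permutation carrying an odd cycle), and factor $w=\sigma_{1}\cdots\sigma_{s}$ into $s$ fixed-point-free involutions generating a primitive group; having $s\ge3$ of them breaks the normality of $\langle g^{2}\rangle$ that forced imprimitivity in the necessity argument.

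The main obstacle is the sufficiency, and within it case (3). Destroying one block system is easy, but I must simultaneously keep each $\sigma_{i}$ of type exactly $[2,\dots,2]$, respect the quadratic relation (ii) with $g$ carrying an odd cycle, and prove genuine primitivity rather than mere transitivity. I expect to secure primitivity by forcing $G$ to contain a suitable short cycle or a cycle of length coprime to $d$ and then invoking a Jordan-type primitivity or $A_{d}$-generation theorem; the delicate part is the bookkeeping that makes the explicitly chosen involutions have the correct cycle type while their product is the prescribed square, handled separately in the parity cases $d\equiv0$ and $d\equiv2\pmod4$.
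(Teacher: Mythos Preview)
The paper does not prove this theorem: it is quoted from \cite{BN&GDL2} and closed with a bare \qed, so there is no argument here to compare your proposal against. I will therefore assess the proposal on its own.

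Your necessity direction is essentially right but contains a slip. In the subcase $s=2$, $g^{2}=1$ you claim that the orbits of $\langle g\sigma_{1}\rangle$ form a nontrivial block system for $d\ge6$. This fails when $g\sigma_{1}$ is a $d$-cycle, which does happen: with $d=6$, $g=(1\,2)(3\,4)$ and $\sigma_{1}=(1\,3)(2\,5)(4\,6)$ one gets $g\sigma_{1}=(1\,5\,2\,3\,6\,4)$. The conclusion is still true --- a transitive group generated by two involutions whose product has order $d$ is dihedral of order $2d$, and for $d$ even the element $(g\sigma_{1})^{d/2}$ generates a characteristic (hence normal) subgroup whose orbits are nontrivial blocks --- but your sentence as written does not establish it. The $d=4$ argument via $V\trianglelefteq S_{4}$ and the $\langle g^{2}\rangle$-normality argument for $g^{2}\ne1$ are correct.

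The sufficiency direction, however, is not a proof but an outline, and you acknowledge as much. In case~(2) the phrases ``after arranging transitivity'' and ``this lets one place in $G$ a transposition, a $3$-cycle, or a cycle of length coprime to $d$'' stand in for the entire construction; none of the simultaneous constraints --- prescribed cycle type for every $\sigma_{i}$, the relation $g^{2}\sigma_{1}\cdots\sigma_{s}=1$, an odd cycle in $g$, and primitivity --- is actually met. In case~(3) you explicitly label the construction the ``main obstacle'' and describe only what you ``expect'' to do, with the parity split $d\equiv0,2\pmod4$ left as a promissory note. As it stands the proposal proves one implication (modulo the fix above) and only sketches a strategy for the other; the substantive work of the theorem --- producing the primitive realizations --- is missing.
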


 Except for the case $d=2$, where the branched covering clearly is never decomposable, the  theorem
above together    with the analogue for $N=\mathbb{R}P^{2}$ of Proposition 2.6 \cite{BN&GDL1}   characterize nonorientable-admissible data realizable by both,
decomposable primitive and indecomposable branched coverings of even degree over $\mathbb{R}P^{2}$. Moreover nonorientable-admissible data like either
$\{[2,\dots,2],[2,\dots,2]\}$ or $\{[2,2],[2,2],\dots,[2,2]\}$ with $d>2$ are
realizable only by decomposable branched coverings.

In this work we study the case of odd degree with
$N=\mathbb{R}P^{2}$ and $\nu(\mathscr{D})>d-1$ (i.e.\ $\chi(M)\le0$, compare (\ref {hcpp})). Our main result is:

\begin{tma1}
Let  $\mathscr{D}$ be a collection of partitions of an odd integer $d$
such that $d-1<\nu(\mathscr{D})\equiv0\pmod2$ (compare (\ref {hcpp})).
Then it can be realized as the branch datum of an indecomposable (and hence primitive) branched covering of degree $d$ over the projective plane with a connected covering surface.
\end{tma1}

Our technique of proving this theorem will allow us to show that certain collections of partitions of an odd integer $d$ are realizable as branch data of
branched coverings over the 2-sphere with a connected covering surface (see Theorem \ref {maior:sphere}).

\section{Preliminaries, terminology and notation}
  \subsection{Permutation groups} \label {subsec:permut}
We denote by $\Sigma_{d}=\Sym_\Omega$ the symmetric group on a set $\Omega$ with $d$ elements and
 by $1_{d}$ its identity element. If $\alpha \in \Sigma_{d}$ and $x \in
 \Omega$, $x^{\alpha}$ is the image of $x$ by $\alpha$. An explicit permutation $\alpha$ will be written sometimes as a product of disjoint cycles, i.e. its \emph{cyclic decomposition}.
The set of lengths of the
 cycles in the cyclic decomposition of $\alpha$, including the trivial ones, defines a partition of $d$, say
 $D_{\alpha}=[d_{\alpha_{1}},\dots,d_{\alpha_{t}}]$,  called
 \emph{the cyclic structure of} $\alpha$. Define
 $\nu(\alpha):=\sum_{i=1}^{t}(d_{\alpha_{i}}-1)$, then $\alpha$ will be an \emph{even permutation} if $\nu(\alpha) \equiv 0 \pmod{2}$.
Given  a partition $D$ of $d$, we say  $\alpha \in
D$ if the cyclic structure of $\alpha$ is
$D$ and  we put $\nu(D):=\nu(\alpha)$.

For $1<r \leq d$, a permutation $\alpha \in \Sigma_{d}$ is called a
$r$-\emph{cycle} if, in its cyclic decomposition, its unique non-trivial cycle
has
length $r$. Permutations $\alpha,\beta \in \Sigma_{d}$ are \emph{conjugate}
if there is $\lambda \in \Sigma_{d}$  such that
$\alpha^{\lambda}:=\lambda \alpha \lambda^{-1}=\beta$. It is a known fact that
conjugate permutations have the same cyclic structure.

  Given a permutation group $G$ on $\Omega$  and $x \in \Omega$,  one  defines the {\it isotropy subgroup of $x$},
 $G_{x}:=\{g \in G: x^{g}=x\}$, and the {\it orbit of $x$ by $G$}, $x^{G}:=\{x^{g}:g\in G\}$. For $H \subset G$, the subsets $\Supp(H):=\{x
 \in \Omega: x^{h} \neq x \textrm{ for some $h \in H$}\}$ and
 $\Fix(H):=\{x \in \Omega:x^{h}=x \textrm{ for all $h \in H$}\}$ are defined. For
 $\Lambda \subset \Omega$ and $g \in G$,
 $\Lambda^{g}:=\{y^{g}:y \in \Lambda\}$.

  $G$ is said
 \emph{transitive} if for all
 $x,y \in \Omega$ there is $g \in G$ such that
 $x^{g}= y$.
 A non-empty subset $\Lambda \subset \Omega$ is a \emph{block} of a transitive $G$ if
 for each $g \in G$ either $\Lambda^{g} = \Lambda$ or
 $\Lambda^{g} \cap \Lambda =\emptyset$. A block $\Lambda$ is
 \emph{trivial} if either $\Lambda =\Omega$ or $\Lambda=\{ x \}$ for some $x \in
 \Omega$. Given a block $\Lambda$ of $G$, the set
 $\Gamma:=\{\Lambda^{\alpha}:\alpha \in G\}$ defines a partition of $\Omega$
 into blocks. This set is called \emph{a system of blocks
 containing} $\Lambda$ and the cardinality of $\Lambda$ divides the cardinality of $\Omega$. $G$ acts naturally on $\Gamma$.
 A transitive permutation group is
 \emph{primitive}
 if it determines only  trivial blocks. Otherwise it is
 \emph{imprimitive}.

\begin{Prop}[Corollary 1.5A, \cite{DM}]\label{dixon}
Let $G$ be a transitive permutation group on a set $\Omega$ with at least two
points. Then $G$ is primitive if and only if each isotropy subgroup $G_{x}$,
for $x \in \Omega$,
is a maximal subgroup of $G$. \qed
\end{Prop}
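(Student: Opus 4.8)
The plan is to set up the classical order-preserving bijection between blocks of $G$ through a fixed point and the subgroups of $G$ containing the corresponding stabilizer, and then to extract the equivalence by examining the two ends of that correspondence. Fix $x \in \Omega$. To a block $\Lambda$ with $x \in \Lambda$ I would assign its setwise stabilizer $G_{\Lambda} = \{g \in G : \Lambda^{g} = \Lambda\}$, and to a subgroup $H$ with $G_{x} \leq H \leq G$ the orbit $x^{H}$. Both assignments visibly respect inclusions, so the goal is to show they are mutually inverse bijections between these two posets.

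The first step is well-definedness. If $g$ fixes $x$ then $x \in \Lambda \cap \Lambda^{g}$, so the block property forces $\Lambda^{g} = \Lambda$; thus $G_{x} \leq G_{\Lambda}$, and the first map lands where claimed. Conversely, that $x^{H}$ is a block follows because any $g$ with $(x^{H})^{g} \cap x^{H} \neq \emptyset$ yields $x^{h_{1}g} = x^{h_{2}}$ for some $h_{i} \in H$, hence $h_{1}gh_{2}^{-1} \in G_{x} \leq H$ and so $g \in H$, giving $(x^{H})^{g} = x^{H}$. The second step is that the two maps invert each other. Here transitivity is essential: to see $x^{G_{\Lambda}} = \Lambda$, for any $y \in \Lambda$ I pick $g \in G$ with $x^{g} = y$, observe $y \in \Lambda \cap \Lambda^{g}$ so $g \in G_{\Lambda}$, and conclude $y \in x^{G_{\Lambda}}$, the reverse inclusion being immediate; the identity $G_{x^{H}} = H$ is the analogous computation using $G_{x} \leq H$.

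It then remains to read off the statement. The subgroup $G_{x}$ corresponds to the singleton $\{x\}$ and the whole group $G$ to $x^{G} = \Omega$ by transitivity, and $G_{x} \neq G$ since $G$ moves at least two points. So a subgroup strictly between $G_{x}$ and $G$ corresponds under the bijection to a block $\Lambda$ with $\{x\} \subsetneq \Lambda \subsetneq \Omega$, i.e.\ a nontrivial block through $x$; hence $G_{x}$ is maximal precisely when every block through $x$ is trivial. Finally, transitivity lets me translate any block to one of the same cardinality containing $x$, so all blocks are trivial iff all blocks through $x$ are, which is primitivity, and conjugacy of point stabilizers under the transitive $G$ makes maximality of $G_{x}$ independent of the chosen $x$. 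I expect no serious obstacle: the whole content is the correspondence itself, and the only place demanding genuine care is the mutual-inverse verification, where transitivity must be invoked correctly.
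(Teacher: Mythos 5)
Your proof is correct: the block--stabilizer correspondence is set up properly, the verification that $x^{H}$ is a block and that $\Lambda \mapsto G_{\Lambda}$, $H \mapsto x^{H}$ are mutually inverse uses transitivity exactly where it must, and the passage from ``no subgroup strictly between $G_{x}$ and $G$'' to ``no nontrivial block'' (including translating an arbitrary block to one through $x$) is complete. Note that the paper itself gives no proof --- it quotes this as Corollary 1.5A of Dixon--Mortimer \cite{DM} --- and your argument is precisely the standard one from that source, namely the order-isomorphism between the lattice of blocks containing $x$ and the lattice of subgroups containing $G_{x}$, so there is nothing to compare beyond noting you have reconstructed the cited textbook proof faithfully.
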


It will be important for us to recognize when a  permutation $\alpha$ already provides primitivity for any subgroup that contains $\alpha$.

\begin{Ex}\label{referee}
If gcd$(l,d)=1$ and $l$ is greater than any non-trivial divisor of $d$ then any transitive permutation group $G < \Sigma_{d}$ containing a $l$-cycle is primitive
(this holds, for example, if $d=2l\pm1$). In fact, we can assume that $G$ contains the cycle $(1 \dots l)$. If there is a block of $G$ containing two elements $i$ and $j$
with $i \leq l$ and $j>l$ then it also contains $1,\dots,l$, thus the cardinality of the block is $\geq l+1$, hence it  equals $d$ and the block is trivial.
 Otherwise the cardinality of each block divides both $l$ and $d-l$, hence it equals 1, thus all blocks of $G$ are trivial. Hence $G$ is a primitive permutation group.
\end{Ex}

\subsection{Branched coverings over the projective plane}\label{pbc}
A surjective continuous open map $\phi:M \lra N$ between closed surfaces such
that:
\begin{itemize}
\item for $x \in N$, $\phi^{-1}(x)$ is a finite
set, and
\item there is  a
 discrete set $B_{\phi} \subset N$  such that the
restriction $\hat{\phi}:=\phi|_{M\setminus\phi^{-1}(B_{\phi})}$ is an ordinary
unbranched
covering of degree $d$,
\end{itemize}
is called a \emph{branched covering of degree d over N}
 and it is  denoted  by
$(M,\phi,N,B_{\phi},d)$. The surface $N$ is the \emph{base surface}, $M$ is
the \emph{covering surface} and $B_{\phi}$ is the \emph{branch point set}.
If $B_{\phi}=\varnothing$ then we also call $\phi$ an {\em unbranched covering}, and if $B_{\phi}\ne\varnothing$ then we also call $\phi$ a {\em proper branched covering}.
 Its \emph{associated unbranched covering} is denoted by
$(\widehat{M},\hat{\phi},\widehat{N},d)$, where
$\widehat{N}:=N\setminus B_{\phi}$ and $\widehat{M}:=M\setminus\phi^{-1}(B_{\phi})$. It is known that
$\chi(\widehat{M})=d\chi(\widehat{N})$, equivalently
\begin{eqnarray}\label{cE}
 \chi(M)-\#\phi^{-1}(B_{\phi})=d(\chi(N)-\#B_ {\phi}).
\end{eqnarray}
The set $B_{\phi}$ contains
the image of the set of the points in $M$ in that $\phi$ fails to be a local
homeomorphism. Then each $x \in B_{\phi}$ determines a
partition $D_{x}$ of $d$ (possibly $D_{x}=[1,\dots,1]$), defined by the local degrees of $\phi$ on each component
in the preimage of a small disk $U_{x}$ around  $x$, with
$U_{x}\cap B_{\phi}=\{x\}$. The collection $\mathscr{D}:=\{D_{x}\}_{x \in B_{\phi}}$ is called
the \emph{branch datum} and its  \emph{total defect} is the
non-negative integer
defined by
$\nu({\mathscr{D}}):=\sum_{x \in B_{\phi}} \nu(D_{x})$. The total defect  satisfies
the \emph{Riemann-Hurwitz formula} (see \cite{Hurwitz} or \cite{EKS}):
\begin{eqnarray}\label{rhf}
\nu(\mathscr{D})=d \chi(N)-\chi(M)\equiv0\pmod2.
\end{eqnarray}
Associated to $(M,\phi,N,B_{\phi},d)$ we have a permutation group, {\it the monodromy group of $\phi$}, denoted by $G(\phi)$, which is the
image of the \emph{Hurwitz representation}
\begin{eqnarray}\label{Hr}
 \rho_{\phi}: \pi_{1}(N\setminus B_{\phi},z) \lra \Sigma_{d},
 \end{eqnarray}
that sends each class $\alpha \in \pi_{1}(N\setminus B_{\phi},z)$ to a permutation of $\phi^{-1}(z)=\{z_{1},
\dots,z_{d}\}$, which indicates the terminal point of the lifting of a loop
in $\alpha$ after fixing the initial point \cite {Hurwitz}. In particular,  for $x \in B_{\phi}$, let $c_{x}$
be a path from $z$ to a small circle $a_{x}$ about $x$ and define the loop class $\mathbf{u}_{x}:=[c_{x}a_{x}c_{x}^{-1}]$. Then the cyclic structure
(see \S \ref {subsec:permut}) of the permutation $\alpha_{x}:=\rho_{\phi}(\mathbf{u}_{x})$ is given by $D_{x}$ and
$\nu(\prod_{x \in B_{\phi}} \alpha_{x}) \equiv \nu(\mathscr{D}) \pmod{2}$.

 \begin{theo}[See \cite{Hu} and \cite{Ez}] \label {thm:1.2}
 Let $N$ be a closed connected surface,  $\mathscr{D}=\{D_1,\dots,D_s\}$ a finite collection of partitions of $d$ and $F=\{x_1,\dots,x_s\}\subset N$ such that $\# F=s
= \#\mathscr{D}$. If it is possible to define a representation $\rho:\pi_{1}(N\setminus F,z) \lra \Sigma_{d}$
with $\rho(\mathbf{u}_{x_i})\in D_i$, $1\le i\le s$ (and with a transitive image Im$\rho=\rho(\pi_1(N\setminus F,z))<\Sigma_d$), then $\mathscr{D}$ is realizable on $N$,
i.e.\ it is the branch datum of a branched covering on $N$ (resp.\ with a connected covering surface $M$) having $\rho$ as its Hurwitz representation.\qed
\end{theo}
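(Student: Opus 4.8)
The plan is to prove this via the classical Riemann existence construction, building the covering from its monodromy in two stages: first over the punctured surface $\widehat{N}=N\setminus F$, then compactifying by inserting the fibers over the branch points. The starting point is the Galois correspondence for covering spaces. Since $\widehat{N}$ is a connected, locally path-connected and semilocally simply connected space (being an open surface), homomorphisms $\pi_{1}(\widehat{N},z)\lra\Sigma_{d}$ are in bijection with equivalence classes of $d$-sheeted covering spaces of $\widehat{N}$ equipped with a labelling of the fiber over $z$ by $\{1,\dots,d\}$; under this correspondence the permutation action on the labelled fiber is exactly the monodromy. First I would apply this to the given $\rho$ to produce an unbranched covering $\hat{\phi}\colon\widehat{M}\lra\widehat{N}$ of degree $d$ whose monodromy is $\rho$. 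When $\mathrm{Im}\,\rho$ is transitive the action on the fiber is transitive, so $\widehat{M}$ is connected.

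The second stage is to fill in the punctures. For each $x_{i}\in F$ choose a small disk $U_{i}\subset N$ with $U_{i}\cap F=\{x_{i}\}$, so that $U_{i}^{*}:=U_{i}\setminus\{x_{i}\}$ is a punctured disk and $\pi_{1}(U_{i}^{*})\cong\mathbb{Z}$ is generated by the class of the small loop $a_{x_i}$. The restriction of $\hat{\phi}$ over $U_{i}^{*}$ is a $d$-fold covering of the punctured disk, whose monodromy is, after transport along the path $c_{x_i}$, the permutation $\alpha_{x_{i}}=\rho(\mathbf{u}_{x_{i}})$; by hypothesis its cyclic structure is $D_{i}$. Since every connected covering of the punctured disk is cyclic, this covering decomposes into one connected piece per cycle of $\alpha_{x_i}$, and a cycle of length $r$ corresponds to the standard $r$-fold covering $w\mapsto w^{r}$ of $U_i^*$ by a punctured disk. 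Each such piece admits the obvious completion obtained by adjoining its center, on which the map extends to $w\mapsto w^{r}$, a branched model over $U_i$ with a single branch point of local degree $r$. I would then define $M$ to be $\widehat{M}$ together with one new point for each cycle of each $\alpha_{x_{i}}$, topologized by these local branched models, and extend $\hat{\phi}$ to $\phi\colon M\lra N$ by sending each new point to the corresponding $x_i$.

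It remains to verify that $(M,\phi,N,F,d)$ is a branched covering realizing $\mathscr{D}$. Near every new point the chart $w\mapsto w^{r}$ exhibits $M$ as a topological surface and $\phi$ as continuous, open and finite-to-one; away from $F$ the map is the unbranched covering $\hat{\phi}$. The space $M$ is compact: $N$ is compact, only finitely many punctures have been added, and over the complement of the finitely many disks $U_i$ the covering $\hat\phi$ is a finite covering of a compact set. Hence $M$ is a closed surface and $\phi$ is a branched covering of degree $d$ with branch set contained in $F$. By construction the local degrees of $\phi$ over $x_{i}$ are precisely the cycle lengths of $\alpha_{x_{i}}$, i.e. the parts of $D_{i}$, so the branch datum is $\mathscr{D}$ and the associated unbranched covering is $\hat\phi$, whose monodromy is $\rho$; thus $\rho$ is the Hurwitz representation of $\phi$. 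Finally, if $\mathrm{Im}\,\rho$ is transitive then $\widehat{M}$ is connected and dense in $M$, so $M$ is connected as well.

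The main obstacle I expect is not any single hard estimate but the careful bookkeeping at the branch points: one must check that the finitely many local branched models $w\mapsto w^{r}$ glue with $\widehat{M}$ into a Hausdorff, second countable topological manifold, that the completion is independent of the auxiliary choices (the disks $U_i$, the paths $c_{x_i}$, the fiber labelling) up to equivalence of branched coverings, and that the decomposition of the covering over each $U_i^{*}$ into cyclic pieces matches the cycle decomposition of $\alpha_{x_i}$ so that the local degrees reproduce the partition $D_i$ exactly. These are precisely the points where the correspondence between permutation data and topological coverings is used, and where the hypothesis $\rho(\mathbf{u}_{x_i})\in D_i$ enters.
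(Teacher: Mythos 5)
Your proposal is correct, and it is precisely the classical two-stage Riemann--Hurwitz existence construction that the paper itself does not reprove: Theorem \ref{thm:1.2} is stated with a \verb|\qed| and attributed to \cite{Hu} and \cite{Ez}, where the argument is exactly yours --- realize $\rho$ as the monodromy of an unbranched $d$-sheeted covering of $N\setminus F$ via the covering-space correspondence, then compactify over each $x_i$ by gluing in one local model $w\mapsto w^{r}$ for each cycle of length $r$ of $\rho(\mathbf{u}_{x_i})$, so that the local degrees reproduce $D_i$. Your verification of the remaining points (compactness and Hausdorffness of $M$, the branch datum being $\mathscr{D}$, $\rho$ being the Hurwitz representation, and connectedness of $M$ from transitivity of ${\rm Im}\,\rho$) is the standard bookkeeping and is sound.
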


\begin{Rm}\label{mon}
If $N=\mathbb{R}P^{2}$ and $\mathscr{D}=\{D_{1},\dots,D_{s}\}$,
in order to define $\rho_{\phi}$, we need at least permutations $\alpha_{i} \in D_{i}$, for $i=1,\dots,s$, such that $\prod_{i=1}^{s}\alpha_{i}$ is a square, as result of the presentation of
$\pi_{1}(\mathbb{R}P^{2}\setminus\{x_{1},\dots,x_{s}\})=\langle a, \mathbf{u}_{1},\dots,\mathbf{u}_{s} \vert  \prod_{i=1}^{s} \mathbf{u_{i}}=a^{-2}\rangle$.
We will also need the transitivity of the subgroup generated by these permutations, in order to obtain a connected covering surface $M$.
\end{Rm}

\begin{Ex}\label{square}
If $r>0$ is an odd natural number then every $r-$cycle is the square of a permutation:
if $\alpha=(a_{1}\;a_{2}\dots a_{r})$ then $\alpha=\beta^{2}$ where
\begin{eqnarray*}
\beta=(a_{1}\;a_{\frac{r+1}{2}+1}\; a_{2}\; a_{\frac{r+1}{2}+2}\dots a_{r}\; a_{\frac{r+1}{2}}).
\end{eqnarray*}
\end{Ex}

We state below  the  main  theorem from \cite {EKS} about the realizability of branched covering  over $\mathbb{R}P^{2}$ in  a slightly
different form, which is more suitable for our purpose.

\begin{theo}[See \cite{EKS}]\label{realization} \label {thm:1.3}
Let $\mathscr{D}$ be a collection of partitions of $d$. Then there is a branched covering $\phi:M\rightarrow \mathbb{R}P^{2}$ of degree $d$, with M connected and nonorientable and with branch datum $\mathscr{D}$ if and only if
\begin{eqnarray}\label{hcpp}
d-1\leq \nu(\mathscr{D})\equiv 0 \pmod{2}.
\end{eqnarray}
Moreover, if $\phi:M\rightarrow \mathbb{R}P^{2}$ is a branched covering and
$\phi_{\#}:\pi_1(M) \rightarrow \pi_1(\mathbb{R}P^{2})$ is trivial then
 $M$ is orientable. If $\phi:M\rightarrow \mathbb{R}P^{2}$ is a branched covering and
$\phi_{\#}:\pi_1(M) \rightarrow \pi_1(\mathbb{R}P^{2})$ is surjective  then
 $M$ is  nonorientable.
\qed
\end{theo}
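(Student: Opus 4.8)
The plan is to reduce the realizability claim to the purely algebraic problem isolated in Theorem \ref{thm:1.2} and Remark \ref{mon}: realizing $\mathscr{D}=\{D_1,\dots,D_s\}$ by a branched covering $\phi:M\to\mathbb{R}P^2$ with $M$ connected is equivalent to choosing $\alpha_i\in D_i$ and a permutation $\beta\in\Sigma_d$ with $\beta^{2}=(\alpha_1\cdots\alpha_s)^{-1}$ such that $\langle\beta,\alpha_1,\dots,\alpha_s\rangle$ is transitive, since the presentation $\pi_1(\mathbb{R}P^2\setminus F)=\langle a,\mathbf{u}_1,\dots,\mathbf{u}_s\mid\prod_i\mathbf{u}_i=a^{-2}\rangle$ forces $\rho(a)=\beta$ to obey $\beta^{-2}=\prod_i\alpha_i$. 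I would establish necessity first. The parity condition is immediate: $\prod_i\alpha_i=\rho(a)^{-2}$ is a square, hence an even permutation, so $\nu(\mathscr{D})\equiv\nu(\prod_i\alpha_i)\equiv0\pmod2$ because $\mathrm{sgn}=(-1)^{\nu}$. The lower bound follows from Riemann--Hurwitz (\ref{rhf}) with $\chi(\mathbb{R}P^2)=1$, which gives $\nu(\mathscr{D})=d-\chi(M)$; since $M$ is a connected nonorientable closed surface we have $\chi(M)\le 1$, whence $\nu(\mathscr{D})\ge d-1$.

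For sufficiency, assuming $d-1\le\nu(\mathscr{D})\equiv0\pmod2$, I would assemble the data in three moves. First, fix representatives $\alpha_i\in D_i$ so that $\gamma:=\alpha_1\cdots\alpha_s$ is a square, which is exactly the condition that $\gamma$ have an even number of cycles of each even length; odd cycles are always squares (Example \ref{square}), so only even-length cycles constrain us, and these can be re-paired within the classes $D_i$ using the global parity $\nu(\mathscr{D})\equiv0$. Then some $\beta$ with $\beta^{2}=\gamma^{-1}$ exists. Second, promote the system to a transitive one: the inequality $\nu(\mathscr{D})\ge d-1$ is precisely the connectivity budget available over $\mathbb{R}P^2$, where the crosscap generator $\beta$ serves as an additional, essentially free connecting permutation (this is why $d-1$ suffices here against the $2(d-1)$ needed over $S^2$). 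Choosing $\beta$ among the square roots of $\gamma^{-1}$ so as to merge any remaining orbits yields transitivity of $\langle\beta,\alpha_1,\dots,\alpha_s\rangle$, hence a connected $M$ by Theorem \ref{thm:1.2}.

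The third move, together with the ``moreover'' part, is the orientation analysis. The orientation character $w:\pi_1(\mathbb{R}P^2\setminus F)\to\pi_1(\mathbb{R}P^2)=\mathbb{Z}/2$ sends $a\mapsto1$ and each $\mathbf{u}_i\mapsto0$, and one checks that $\phi_\#(\pi_1 M)=w(\rho^{-1}(G_{z_1}))$, where $G_{z_1}$ is the point stabilizer; thus $M$ is orientable iff $\phi_\#$ is trivial iff $H:=\rho^{-1}(G_{z_1})\subseteq\ker w$, and nonorientable iff $\phi_\#$ is onto. These two equivalences are exactly the two ``moreover'' implications. To force nonorientability in the construction I observe that $[\pi_1:H]=d$ while $[\pi_1:\ker w]=2$, so for odd $d$ one automatically gets $H\not\subseteq\ker w$ and $M$ is nonorientable with no extra effort; for even $d$ I would arrange a loop of odd crosscap-exponent to stabilize $z_1$, for instance by choosing $\beta$ to fix a point whenever $\gamma$ fixes one (so that $a\in H$ with $w(a)=1$), and otherwise by composing $a$ with a word in the $\mathbf{u}_i$ that returns $\beta(z_1)$ to $z_1$.

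The main obstacle is carrying out the second and third moves simultaneously at the boundary $\nu(\mathscr{D})=d-1$, where $\chi(M)=1$ forces $M=\mathbb{R}P^2$ and there is no slack in the connectivity budget: one must exhibit a single square root $\beta$ of $\gamma^{-1}$ that is transitive together with the $\alpha_i$ and at the same time produces a nonorientable cover. I expect the delicate case to be when every $D_i$ consists of long or even-length cycles, so that enforcing the square condition by pairing even cycles competes with keeping $\langle\beta,\alpha_i\rangle$ transitive and, for even $d$, with placing an odd-crosscap loop inside the point stabilizer. This case will likely require an explicit hands-on construction rather than the generic counting argument used in the bulk of the range.
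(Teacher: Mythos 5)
Your necessity direction and the orientation analysis are sound: the parity of $\nu(\mathscr{D})$ from $\prod_i\alpha_i=\rho(a)^{-2}$ being even, the bound $\nu(\mathscr{D})\ge d-1$ from Riemann--Hurwitz with $\chi(M)\le1$, and the identification of orientability of $M$ with the vanishing of the crosscap character $w$ on $H=\rho^{-1}(G_{z_1})$ (using that $w_1(M)=\phi^*w_1(\mathbb{R}P^2)$ away from the branch points, and that $H^1$ of a closed surface injects into $H^1$ of its complement of finitely many points) are all correct; so is the slick observation that odd $d$ forces $H\not\subseteq\ker w$ by index considerations. Note, however, that the paper does not prove this theorem at all: it is quoted from \cite{EKS} (with the \qed right after the statement), so the real comparison is with the Edmonds--Kulkarni--Stong argument, whose ingredients the paper imports elsewhere as Lemma \ref{algEKS} and Lemmas \ref{l42}--\ref{l43}.

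Measured against that, your sufficiency half has genuine unexecuted steps, and they are exactly where the work lies. First, your ``move one'' conflates evenness with squareness: a permutation is a square iff each even cycle length occurs an even number of times, and the claim that representatives $\alpha_i\in D_i$ can always be ``re-paired'' to make $\prod_i\alpha_i$ a square is asserted, not proved -- and at the boundary $\nu(\mathscr{D})=d-1$ there is no slack for such adjustments. Second, ``choosing $\beta$ among the square roots of $\gamma^{-1}$ so as to merge any remaining orbits'' is a heuristic, not an argument: square roots can only merge equal-length cycles of $\gamma^{-1}$ in pairs, so transitivity is a real constraint. Third, for even $d$ your trick of making $\beta$ fix a point (so that $a\in H$ with $w(a)=1$) is correct when available, but $\gamma$ need not have a fixed point for an arbitrary choice of the $\alpha_i$ (e.g.\ $d=4$, $\gamma=(1\,2)(3\,4)$, where natural choices yield the torus rather than the Klein bottle), so nonorientability must be engineered jointly with the other two constraints. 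The EKS route resolves all three simultaneously rather than sequentially: using their product lemmas (the ones quoted here as Lemmas \ref{algEKS}, \ref{l42}, \ref{l43}) one chooses the $\alpha_i$ so that $\prod_i\alpha_i$ is a single cycle of odd length -- a $d$-cycle for $d$ odd, a $(d-1)$-cycle for $d$ even -- which is automatically a square by Example \ref{square}, whose root is transitive by itself (or transitive together with an $\alpha_i$ moving the leftover fixed point), and whose fixed point in the even case places $a$ in the stabilizer with $w(a)=1$. So your plan points in the right direction but, as you yourself flag in the last paragraph, it stops short of the construction that constitutes the proof.
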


In the cases   where the covering   space is orientable, the    problem does not appear as a problem over  $\mathbb{R}P^{2}$ but
will lead naturally to a similar question for a branched covering over $S^2$.

 Finally we conclude this section by observing that from the   \emph{Riemann-Hurwitz formula} above we  have
 $\nu(\mathscr{D})=d-1$  if and only if  $\chi(M)=1$ which is equivalent to say that $M$ is homeomorphic to $\mathbb{R}P^{2}$. Also if  $\nu(\mathscr{D})=d$ by the Hurwitz condition (\ref {rhf}) follows that $d$ is even. Therefore if $d$ is odd we can not have    $\nu(\mathscr{D})=d$.

\section{Decomposability}
Given a branched covering, it is
\emph{decomposable} if it can be written as a composition of two non-trivial
branched coverings (i.e.\ both with degree bigger than 1), otherwise it is called
\emph{indecomposable}. In a decomposition of a proper
branched covering, at least one of its factors is a proper branched covering.
 Moreover,
since the degree of a decomposable covering is the product of the degrees of
its factors (see \cite{BBZ}, theorem 2.3), we are interested in branched
coverings with non-prime degree.

\begin{Prop}[Proposition 2.8 \cite{BN&GDL1}]\label{yo}
The covering surface $M$ of a branched covering is connected if and only if its monodromy group is transitive.
A
branched covering with a connected covering surface is decomposable if and only if its monodromy
group is imprimitive.
\qed
\end{Prop}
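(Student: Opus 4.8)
The plan is to establish the two assertions separately, in both cases passing through the associated unbranched covering $\hat{\phi}\colon\widehat{M}\to\widehat{N}$ and combining classical covering-space theory with the monodromy correspondence of Theorem \ref{thm:1.2}. For the first assertion (connectedness $\Leftrightarrow$ transitivity), I would first argue that $M$ and $\widehat{M}$ have the same number of connected components: on one side $\widehat{M}=M\setminus\phi^{-1}(B_\phi)$ is $M$ with a finite set of points removed, which cannot disconnect a surface; on the other side $M=\widehat{M}\cup\phi^{-1}(B_\phi)$ and each point of $\phi^{-1}(B_\phi)$ lies in the closure of some component of $\widehat{M}$, so adjoining these points neither merges nor creates components. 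Hence $M$ is connected iff $\widehat{M}$ is. By the covering-space theory applied to $\hat{\phi}$, the components of $\widehat{M}$ correspond bijectively to the orbits of the monodromy action of $G(\phi)$ on the fiber $\phi^{-1}(z)$, so $M$ is connected iff that action has a single orbit, i.e.\ iff $G(\phi)$ is transitive.

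For the forward direction of the second assertion (decomposable $\Rightarrow$ imprimitive), assume $M$ connected, so $G=G(\phi)$ is transitive, and suppose $\phi=\psi\circ\eta$ with $\eta\colon M\to L$ of degree $d_1>1$ and $\psi\colon L\to N$ of degree $d_2>1$, whence $d=d_1d_2$ (degrees multiply, \cite{BBZ}). Fixing a basepoint $z$ whose fiber is regular for all three coverings, the map $\eta$ partitions $\phi^{-1}(z)$ into the $d_2$ sets $\eta^{-1}(w)\cap\phi^{-1}(z)$, $w\in\psi^{-1}(z)$, each of cardinality $d_1$. Since lifting a loop to $M$ and then projecting by $\eta$ produces its lift to $L$, the monodromy of $\phi$ covers that of $\psi$, so the $G$-action permutes these $d_2$ sets among themselves; they therefore form a system of blocks. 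As $1<d_1<d$ these blocks are nontrivial, so $G$ is imprimitive.

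For the reverse direction (imprimitive $\Rightarrow$ decomposable), I would start from a nontrivial block $\Lambda\ni z_1$ of the transitive group $G$, with $|\Lambda|=d_1$ and $|\Gamma|=d_2$ for the system of blocks $\Gamma$ containing $\Lambda$, so $1<d_1,d_2<d$ and $d=d_1d_2$. The setwise stabilizer $G_{\{\Lambda\}}=\{g\in G:\Lambda^{g}=\Lambda\}$ satisfies $G_{z_1}\lneq G_{\{\Lambda\}}\lneq G$, which is exactly the failure of maximality detected by Proposition \ref{dixon}, with $[G:G_{\{\Lambda\}}]=d_2$ and $[G_{\{\Lambda\}}:G_{z_1}]=d_1$ (the block stabilizer acts transitively on its block). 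By the Galois correspondence for $\hat{\phi}$, the subgroup $\rho_\phi^{-1}(G_{\{\Lambda\}})\le\pi_1(\widehat{N},z)$ determines an intermediate unbranched covering $\widehat{M}\to\widehat{L}\to\widehat{N}$ of degrees $d_1$ and $d_2$. It then remains to complete $\widehat{L}$ over the branch points, filling in each puncture to obtain a closed surface $L$ and extending the two maps to branched coverings $\eta\colon M\to L$ and $\psi\colon L\to N$ with $\phi=\psi\circ\eta$ and both factors of degree $>1$, yielding the decomposition.

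The main obstacle is precisely this last completion step. One must verify, point by point over each $x\in B_\phi$, that the finite unbranched covering of the punctured disk underlying $\widehat{L}$ extends to a branched covering of the full disk, and that these local extensions are compatible with the factorization of $\hat{\phi}$ through $\widehat{L}$. Concretely, the local monodromy $\alpha_x$ acts on $\Gamma$, and the orbits of this induced action prescribe the cyclic structure of the completed covering $\psi$ near the images of the branch points; the key input is that the standard (Fox) completion of a finite covering of a punctured surface is canonical, hence commutes with passage to the intermediate covering. Once this is granted, everything else reduces to the orbit–stabilizer bookkeeping recorded above.
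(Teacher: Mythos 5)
Correct, and essentially the approach of the cited source: the paper itself gives no proof (the statement is quoted from Proposition 2.8 of \cite{BN&GDL1} with a \qed), and your argument — components of $\widehat M$ correspond to orbits of $G(\phi)$ on the fiber, intermediate fibers $\eta^{-1}(w)\cap\phi^{-1}(z)$ give nontrivial blocks, and conversely a block stabilizer $G_{z_1}\lneq G_{\{\Lambda\}}\lneq G$ yields an intermediate unbranched covering completed over the punctures — is the standard proof. The completion step you flag is indeed the only delicate point, and your appeal to the uniqueness (canonicity) of the Fox completion of a finite covering of a punctured surface, which forces the completed composition to agree with $\phi$, is the standard and sufficient way to close it.
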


\begin{Prop}\label{1pto}
A branched covering
$(\mathbb{R}P^{2},\phi,\mathbb{R}P^{2},\{x\},d)$ is decomposable if and only if
  $d$ is not a prime.
\end{Prop}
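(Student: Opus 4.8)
The plan is to rephrase decomposability in terms of the monodromy group, using Proposition~\ref{yo}, and then to identify that group explicitly. First I would determine $G(\phi)$. Specializing the presentation in Remark~\ref{mon} to $s=1$ gives $\pi_{1}(\mathbb{R}P^{2}\setminus\{x\})=\langle a,\mathbf{u}_{1}\mid \mathbf{u}_{1}=a^{-2}\rangle$; since the relation eliminates $\mathbf{u}_{1}$, this group is infinite cyclic, generated by $a$. Hence the monodromy group $G(\phi)=\rho_{\phi}(\pi_{1})$ is cyclic, generated by the single permutation $\beta:=\rho_{\phi}(a)$, and the branch permutation is $\alpha_{x}=\rho_{\phi}(\mathbf{u}_{1})=\beta^{-2}$.

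Next I would exploit that the covering surface $\mathbb{R}P^{2}$ is connected. By the first assertion of Proposition~\ref{yo}, $G(\phi)=\langle\beta\rangle$ is transitive on the $d$-point fibre $\phi^{-1}(z)$. The orbits of $\langle\beta\rangle$ are exactly the cycles of $\beta$, so transitivity forces $\beta$ to be a single $d$-cycle; thus $G(\phi)\cong\mathbb{Z}/d\mathbb{Z}$ acting by its regular (translation) representation. This is consistent with Riemann--Hurwitz: the formula (\ref{rhf}) gives $\nu(\mathscr{D})=d\chi(\mathbb{R}P^{2})-\chi(\mathbb{R}P^{2})=d-1$, which forces $D_{x}=[d]$ and $d$ odd, so that $\alpha_{x}=\beta^{-2}$ is indeed a $d$-cycle.

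Finally I would apply the decomposability criterion. By the second assertion of Proposition~\ref{yo}, the covering (with connected surface) is decomposable if and only if $G(\phi)$ is imprimitive, so it remains to settle primitivity of the regular cyclic group. Here Dixon's criterion (Proposition~\ref{dixon}) is convenient: $G(\phi)$ is primitive iff a point stabilizer is maximal, but in a regular action every point stabilizer is trivial, and $\{1_{d}\}$ is maximal in $\mathbb{Z}/d\mathbb{Z}$ precisely when $\mathbb{Z}/d\mathbb{Z}$ has no proper non-trivial subgroup, i.e.\ precisely when $d$ is prime. Equivalently, the blocks through a fixed point are exactly the subgroups of $\mathbb{Z}/d\mathbb{Z}$, and these are all trivial iff $d$ is prime. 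Therefore $G(\phi)$ is imprimitive iff $d$ is not prime, and combining this with Proposition~\ref{yo} gives that the covering is decomposable iff $d$ is not prime.

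There is no serious computation in this argument; the only points needing care are the deduction that connectedness upgrades $\beta$ to a full $d$-cycle (so that $G(\phi)$ is the \emph{regular} cyclic group rather than some intransitive cyclic group), and the clean identification of the blocks of this regular action with the subgroups of $\mathbb{Z}/d\mathbb{Z}$ --- which is exactly what makes primitivity coincide with the primality of $d$.
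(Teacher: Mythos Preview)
Your proof is correct and follows essentially the same route as the paper: identify the monodromy group as the cyclic group $\langle\beta\rangle$ of order $d$ acting regularly, observe that the point stabilizer is trivial, and apply Proposition~\ref{dixon} together with Proposition~\ref{yo}. The only cosmetic difference is that the paper deduces that the generator is a $d$-cycle from the branch datum (since $\alpha^{2}=\gamma^{-1}$ with $\gamma$ a $d$-cycle forces $\alpha$ to be a $d$-cycle), whereas you deduce it from transitivity of $\langle\beta\rangle$; both reach the same conclusion and the remainder of the argument coincides.
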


\begin{proof}
From (\ref{cE}) and (\ref{rhf})
the  total defect of the branch datum is $d-1$ and by (\ref{hcpp}), $d$
 is odd. So the branched covering
$(\mathbb{R}P^{2},\phi,\mathbb{R}P^{2},\{x\},d)$ has branch datum
 $\mathscr{D}=\{[d]\}$.
In a representation
\begin{eqnarray*}
\rho: \pi_{1}(\mathbb{R}P^{2}\setminus\{x\})= \langle
 a,\mathbf{u}_{x}| a^{2}\mathbf{u}_{x}=1 \rangle & \lra & \Sigma_{d}\\
 a & \longmapsto & \alpha,\\
\mathbf{u}_{x}& \longmapsto & \gamma,
\end{eqnarray*}
where
 $\alpha^{2}=\gamma^{-1}$ is a $d$-cycle, necessarily $\alpha$ is a $d$-cycle. Therefore,
 in the group
$G:=\Im\rho=\langle \gamma, \alpha \rangle=\langle \alpha \rangle$,  every isotropy
 subgroup is trivial and, if $d>1$ is not a prime,
 it is contained in a
 proper subgroup of $G$. Then by Proposition
 \ref{dixon}, $G$ is imprimitive, and by Proposition \ref {yo}, the branched covering is decomposable.

The inverse implication is obvious.
\end{proof}

\begin{Lemma}\label{cor}
Let $\alpha \in \Sigma_{d}$ be an even permutation such that $\nu(\alpha)<d-1$ and either
$\Fix(\alpha)\neq \emptyset$ or $\alpha^{2}\neq 1_{d}$.
Then there exists a $d-$cycle $\beta \in \Sigma_{d}$ such that $\alpha \beta$ is also a $d-$cycle and $H=\langle \alpha,\beta \rangle$ is a primitive permutation group.
\end{Lemma}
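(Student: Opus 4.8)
The plan is to split the statement into a cycle‑product part and a primitivity part, and to exploit the freedom left over after the first part to arrange the second. First two reductions. Writing $\nu(\alpha)=d-t$ with $t$ the number of cycles of $\alpha$ (trivial cycles included), the hypothesis $\nu(\alpha)<d-1$ means $t\ge 2$, so $\alpha$ is not itself a $d$-cycle. Also, if $d$ is prime then every block of a transitive group has size dividing $d$, hence size $1$ or $d$, so transitivity already gives primitivity; thus it suffices to produce the two $d$-cycles and I may assume $d$ is composite (and $\alpha\ne 1_d$, the only situation in which the statement has content). I seek a $d$-cycle $\beta$ with $\gamma:=\alpha\beta$ again a $d$-cycle; since $\alpha=\gamma\beta^{-1}$, this is exactly the demand that $\alpha$ be a product of the two $d$-cycles $\gamma$ and $\beta^{-1}$.

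For the factorization, I note that a product of two $d$-cycles is always even, so the parity condition $\nu(\alpha)\equiv 0\pmod 2$ is necessary and matches the Riemann–Hurwitz count (\ref{rhf}) for the datum $\{[d],[d],D_\alpha\}$ over $S^{2}$. Conversely it is classical that every even permutation of $\Sigma_d$ is a product of two $d$-cycles; one can also produce $\beta$ by hand, threading the cycles of $\alpha$ into a single $d$-cycle and checking, via the rule that multiplying by a transposition joining two cycles merges them, that $\alpha\beta$ closes up into one cycle. The important point is that this yields not one but a whole family of admissible pairs $(\beta,\gamma)$, and this freedom is what I will use to force primitivity.

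The crux is primitivity of $G=\langle\alpha,\beta\rangle=\langle\beta,\gamma\rangle$. Number $\Omega=\{b_0,\dots,b_{d-1}\}$ so that $\beta(b_i)=b_{i+1}$ with indices mod $d$. Since $G$ contains the $d$-cycle $\beta$, every block of $G$ is in particular a block of $\langle\beta\rangle$, hence a congruence class $\{b_i : i\equiv c \pmod{k}\}$ for some proper divisor $k\mid d$ with $1<k<d$ (the analogous constraint, read in the cyclic order of $\gamma$, comes from $\langle\gamma\rangle$). Writing $\alpha(b_i)=b_{\phi(i)}$, the group $G$ is therefore imprimitive precisely when $\phi$ respects congruence mod $k$ for some such $k$. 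So the target is to choose the threading in the factorization step so that, for \emph{every} proper divisor $k$ of $d$, $\alpha$ carries some pair of points congruent mod $k$ to a pair that is incongruent mod $k$; primitivity of $G$ then follows by the block characterization (Proposition \ref{dixon}, in the spirit of Example \ref{referee}), and together with the factorization this proves the lemma. The main obstacle is exactly this simultaneous demand: the pair $(\beta,\gamma)$ must keep $\gamma=\alpha\beta$ a single $d$-cycle while placing the cycles of $\alpha$ generically enough to break all divisor‑congruences at once. This is where the remaining hypotheses enter. Evenness is already needed for the factorization; and the fixed‑point‑free involution must be excluded because for such an $\alpha$ the two‑to‑one pairing it induces forces the order‑two congruence no matter how one threads, so the block system into two blocks of size $d/2$ survives every admissible $\beta$ (a phenomenon one can already see for $d=4$). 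When instead $\alpha$ has a fixed point, or a cycle with $\alpha^{2}\ne 1_d$ — the two cases of the hypothesis — that extra structure (a fixed $b_0$ adjacent to a genuinely moved point, or a cycle of length $\ge 3$) is what I expect to use to disrupt every congruence, and making this disruption compatible with the single‑cycle condition on $\gamma$ is the technical heart of the argument.
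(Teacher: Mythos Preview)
The paper does not give a self-contained proof of this lemma: it simply cites Case~(3) of Theorem~3.2 in \cite{BN&GDL1}. So there is no internal argument to compare against, and the question is just whether your proposal stands on its own.

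Your framing is correct. The reduction to composite $d$, the fact that every even permutation is a product of two $d$-cycles, and the identification of the possible block systems of $G=\langle\alpha,\beta\rangle$ with congruence-class partitions in the cyclic order of $\beta$ are all right; your diagnosis of why the fixed-point-free involution must be excluded (illustrated at $d=4$) is also accurate. But what you have written is a plan, not a proof. You explicitly leave undone the ``technical heart'': producing a specific threading of the cycles of $\alpha$ into a $d$-cycle $\beta$ so that (i) $\alpha\beta$ remains a single $d$-cycle and (ii) for \emph{every} proper divisor $k\mid d$ the permutation $\alpha$ fails to respect the mod-$k$ partition determined by $\beta$. That simultaneous construction is the entire content of the lemma. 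Saying that there is a ``whole family'' of factorizations does not discharge it; you still need either an explicit recipe (using the fixed point, or the cycle of length $\ge3$, to place points of $\alpha$ at positions that break all divisor congruences while keeping $\alpha\beta$ a $d$-cycle) or a counting/genericity argument showing the two constraints are compatible. Neither is supplied, so the argument is incomplete.
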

\begin{proof}
Case (3) of Theorem 3.2 in \cite{BN&GDL1}.
\end{proof}

\begin{Prop}\label{d impar 1}
Let $d>1$ be odd
and $\mathscr{D}=\{D_{1},\dots,D_{s}\}$ a nonorientable-admissible datum of degree $d$ (see (\ref {hcpp}) or Theorem \ref {realization}).
If there is $i\in\{1,\dots,s\}$ such that $D_{i}=[d]$ and $D_j\ne[1,\dots,1]$ for all $j\ne i$, then $\mathscr{D}$
 is realizable by an indecomposable (and hence primitive) branched covering over $\mathbb{R}P^{2}$ if and only if $d$ is prime or $s>1$.
\end{Prop}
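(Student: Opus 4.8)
The plan is to prove both implications, disposing of the single-datum case and the prime-degree case first, and then constructing a primitive monodromy in the remaining (and main) regime $s>1$ with $d$ composite.

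For the converse, suppose $d$ is composite and $s=1$. The hypothesis then forces $\mathscr{D}=\{[d]\}$, which is exactly the situation of Proposition \ref{1pto}: since $d$ is not prime, every realization is decomposable, so $\mathscr{D}$ is not realizable by an indecomposable covering. This shows that realizability by an indecomposable covering implies $d$ prime or $s>1$. For the forward implication when $d$ is prime, admissibility together with Theorem \ref{realization} yields a realization over $\mathbb{R}P^{2}$ with connected nonorientable covering surface; since the degree of a decomposable covering is a product of two factors each $>1$ (see the discussion opening Section~2), a covering of prime degree is automatically indecomposable. Hence from now on I assume $d$ odd and composite and $s>1$, and must produce an indecomposable realization.

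For the main construction, fix the index $i$ with $D_{i}=[d]$ and, for each $j\ne i$, choose $\alpha_{j}\in D_{j}$; set $\gamma:=\prod_{j\ne i}\alpha_{j}$. Since $\operatorname{sign}(\gamma)=(-1)^{\sum_{j\ne i}\nu(D_{j})}$ and $\sum_{j\ne i}\nu(D_{j})=\nu(\mathscr{D})-(d-1)$ is even ($d$ odd, $\nu(\mathscr{D})$ even), $\gamma$ is an even permutation. I would select the $\alpha_{j}$ so that, moreover, $\gamma$ is nontrivial, is not a $d$-cycle, and either has a fixed point or a cycle of length $\ge3$; note that $\nu(\gamma)=d-\#\{\text{cycles of }\gamma\}$, so $\nu(\gamma)<d-1$ is equivalent to $\gamma$ not being a single $d$-cycle. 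When $s=2$ and $D_{2}\ne[d]$ this is automatic: $\gamma=\alpha_{2}$ is nontrivial and, since $d$ is odd, a partition with all parts $\le2$ must leave a fixed point, so $\gamma$ has a fixed point or a long cycle. When $s\ge3$ there are at least two factors, and the supports can be arranged to force a common fixed point while keeping $\gamma$ nontrivial and not a $d$-cycle. With such a $\gamma$, Lemma \ref{cor} provides a $d$-cycle $\beta$ (realizing $D_{i}=[d]$) with $\gamma\beta$ again a $d$-cycle and $H=\langle\gamma,\beta\rangle$ primitive. Ordering the branch loops so that their product maps to $\gamma\beta$, this product is a $d$-cycle, hence a square as $d$ is odd (Example \ref{square}); so a representation $\rho$ of $\pi_{1}(\mathbb{R}P^{2}\setminus F)$ with $\rho(a)^{-2}=\gamma\beta$ exists as in Remark \ref{mon}. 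Its image contains $\gamma$ and $\beta$, hence contains $H$, so it is transitive and, since any block system for the image restricts to one for $H$, primitive. By Theorem \ref{thm:1.2} this realizes $\mathscr{D}$ over $\mathbb{R}P^{2}$ with connected covering surface, and by Proposition \ref{yo} the covering is indecomposable; nonorientability follows from Theorem \ref{realization}, as $\rho(a)$ is a nontrivial $d$-cycle.

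The only datum escaping this construction, and the main obstacle, is $\mathscr{D}=\{[d],[d]\}$ with $d$ odd composite, since there $\gamma$ is forced to be a $d$-cycle and Lemma \ref{cor} does not apply. For this case I would build the realization directly: choose two $d$-cycles $\alpha_{1},\alpha_{2}$ whose product is a square, so that the monodromy group equals $\langle\rho(a),\alpha_{1}\rangle$ (because $\alpha_{2}=\alpha_{1}^{-1}\rho(a)^{-2}$). The cleanest route is to engineer this group to contain an $l$-cycle with $l=(d-1)/2$: then $\gcd(l,d)=1$ and, for $d\ge9$, $l>d/3$ exceeds every proper divisor of $d$, so Example \ref{referee} forces primitivity once transitivity (guaranteed by the $d$-cycle $\alpha_{1}$) is present. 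Verifying that two $d$-cycles can be chosen with product the prescribed square while realizing such an $l$-cycle inside the group is the delicate point, and it is here that the bulk of the case analysis will concentrate.
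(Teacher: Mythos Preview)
Your treatment of the converse and of the prime case is fine, and your use of Lemma~\ref{cor} when $\gamma=\prod_{j\ne i}\alpha_j$ is not a $d$-cycle matches the paper. The gap is precisely the situation you flag as ``the main obstacle'': when $\gamma$ is a $d$-cycle (in particular $\mathscr{D}=\{[d],[d]\}$, but more generally whenever the product of the chosen $\alpha_j$ happens to be a $d$-cycle). You propose to avoid this by rearranging supports when $s\ge3$, but you give no argument for why this can always be done (think of $D_j=[d]$ for every $j$), and for $s=2$ you are forced into an ad hoc construction that you yourself leave unresolved.

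The paper does not try to avoid this case; it dispatches it in one line. If $\gamma$ is a $d$-cycle, set $\gamma_i:=\gamma^{-1}\in[d]$, so that $\prod_j\gamma_j=1_d$, and take $\rho(a)=\alpha:=(1\ \ 1^{\gamma_i})$, a transposition (so $\alpha^2=1_d$, as required by the relation in $\pi_1(\mathbb{R}P^2\setminus F)$). The image contains the $d$-cycle $\gamma_i$, hence is transitive. For primitivity, if $B$ is a block with $1\in B$ and $|B|>1$, then $B^\alpha$ meets $B$ in $B\setminus\{1\}$ unless $1^{\gamma_i}\in B$; either way $B$ contains both $1$ and $1^{\gamma_i}$, so $B^{\gamma_i}\cap B\ne\varnothing$, hence $B^{\gamma_i}=B$, and since $\gamma_i$ is a $d$-cycle this forces $B=\Omega$. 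This single observation replaces your entire proposed ``$l=(d-1)/2$'' construction and also makes the unproved support-rearrangement for $s\ge3$ unnecessary: simply choose the $\alpha_j$ arbitrarily (adjusting only to ensure $\gamma\ne1_d$, as the paper does), and split into the two cases $\nu(\gamma)=d-1$ and $\nu(\gamma)<d-1$.
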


\begin{proof}
Suppose $\mathscr{D}=\{D_{1},\dots,D_{s}\}$, $s>1$ and  without loss of generality suppose $D_{s}=[d]$. For
$i=1,\dots,s-1$ choose $\gamma_{i} \in \Sigma_{d}$ with cyclic structure  $D_{i}$.

If $\prod_{i=1}^{s-1} \gamma_{i} \neq 1_{d}$, then its cyclic structure
determines a new partition  $D=[d_{1},\dots,d_{q}]$ of $d$ such that $\nu(D)=d-q
\equiv
\sum_{i=1}^{s-1} \nu(D_{i})=\nu(\mathscr{D})-\nu(D_{s})\equiv d-1 \equiv 0
\pmod{2}$, then $q$ is odd. If $q=1$, define
$\gamma_{s}:=(\prod_{i=1}^{s-1} \gamma_{i})^{-1}$, $\alpha:=(1 \;
1^{\gamma_{s}})$ and the representation
\begin{eqnarray*}
\rho: \langle a,\{\mathbf{u}_{i}\}_{i=1}^{s}\mid a^{2} \prod_{i=1}^{s}
\mathbf{u}_{i} =1 \rangle & \lra &
\Sigma_{d}\\
a & \longmapsto & \alpha, \\
\mathbf{u}_{i} & \longmapsto & \gamma_{i}.
\end{eqnarray*}
Notice that Im$\rho$ is a primitive permutation group, because by the structure of
 $\alpha$, every block containing the element  $1$ contains also
 $1^{\alpha}=1^{\gamma_{s}}$ and since $\gamma_{s}$ is a  $d$-cycle, this
block contains everything, therefore the block is trivial.

 If $q>1$, we apply Lemma \ref{cor} for $\prod_{i=1}^{s-1} \gamma_{i}$ and
 therefore there is a $d$-cycle $\gamma_{s}$ such that $\prod_{i=1}^{s}
 \gamma_{i}$ is a $d$-cycle and $\langle \prod_{i=1}^{s-1} \gamma_{i},
 \gamma_{s}\rangle$ is primitive. Moreover, since $\prod_{i=1}^{s} \gamma_{i}$
 is an odd length cycle, by Example \ref{square} there is $\alpha \in \Sigma_{d}$ such that
 $\alpha^{2}=\prod_{i=1}^{s}
 \gamma_{i}$. We define the following representation:
\begin{eqnarray*}
\rho: \langle a,\{\mathbf{u}_{i}\}_{i=1}^{s}\mid a^{2} \prod_{i=1}^{s} \mathbf{u}_{i} =1 \rangle & \lra &
\Sigma_{d}\\
a & \longmapsto & \alpha^{-1}, \\
\mathbf{u}_{i} & \longmapsto & \gamma_{i},
\end{eqnarray*}
with Im$\rho$ primitive because it contains $\langle \prod_{i=1}^{s-1}
 \gamma_{i},
 \gamma_{s} \rangle$. Since Im$\rho$ is  primitive in both cases above, Proposition \ref{yo} guarantees that
 branched coverings associated by virtue of Theorem \ref {thm:1.2} to each one of the representations
 above are indecomposable.

If $\prod_{i=1}^{s-1} \gamma_{i}= 1_{d}$ and there is some
 $\gamma_{i}$ with a cycle with length $\geq 3$, we change $\gamma_{i}$ by
$\gamma_{i}^{-1}$. If each $\gamma_{i}$ is a product of independent cycles of length less
 than or equal to  2 we replace the symbol of a transposition (which exists because $D_j\ne[1,\dots,1]$ for some $j=1,\dots,s-1$) by a symbol in
 another cycle. Thus, without changing the  cyclic structure of the
 permutations, the new product $\prod_{i=1}^{s}\gamma_{i}$ is different from the
 identity and we are in the case before.

 The implications follow immediately from Proposition \ref{1pto} and Theorem \ref {thm:1.3}.
\end{proof}

\begin{Rm} \label {rem:2.5}
Observe that, among the nonorientable-admissible data studied in the previous proposition, the ones that are realized by indecomposable branched coverings
over $\mathbb{R}P^{2}$ are such that $\nu(\mathscr{D})>d-1$ or $d$ is prime. We want to know if this property  is enough to guarantee the existence of an 
indecomposable realization over $\mathbb{R}P^{2}$ for any nonorientable-admissible branch data of odd degree $d$.  For that  it remains to analyze the cases where the
partitions in $\mathscr{D}$ are all different from $[d]$ and $d$ is odd and non-prime.
\end{Rm}

\section{The case of 2 branch points}

In this section we study the problem in the special case where the number of branch points is
two (which is the minimal possible value, provided that $\nu(\mathscr D)\ge d$). In the next section  we will show that the general
case can be reduced to this case. The main result of the section is:

  \begin{theo}\label{maior}
If $\mathscr{D}=\{D_{1},D_{2}\}$ is such that $d$ is odd and $d< \nu(\mathscr{D})\equiv 0 \pmod{2}$ then $\mathscr{D}$ is realizable by an indecomposable branched covering over
$\mathbb{R}P^{2}$ with a connected covering surface.
 \end{theo}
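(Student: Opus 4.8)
The plan is to reduce the statement to a purely group-theoretic problem and then to build the required permutations by hand. By Remark \ref{mon} and Theorem \ref{thm:1.2} it suffices to exhibit $\alpha_1\in D_1$ and $\alpha_2\in D_2$ such that $\alpha_1\alpha_2$ is a $d$-cycle and the group $H:=\langle\alpha_1,\alpha_2\rangle$ is primitive. First I would dispose of a degenerate subcase: if $D_1=[d]$ or $D_2=[d]$ then $\nu(D_{3-i})=\nu(\mathscr{D})-(d-1)\ge2$, so $D_{3-i}\ne[1,\dots,1]$ and Proposition \ref{d impar 1} (with $s=2>1$) already gives an indecomposable realization; hence I may assume $D_1,D_2\ne[d]$, i.e.\ each $D_i$ has at least two parts. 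Granting such a pair, $(\alpha_1\alpha_2)^{-1}$ is an odd-length cycle, hence a square $\gamma^2$ by Example \ref{square}; the assignment $a\mapsto\gamma$, $\mathbf{u}_i\mapsto\alpha_i$ respects the relation $a^2\mathbf{u}_1\mathbf{u}_2=1$, so it defines a Hurwitz representation $\rho$. Its image $\Im\rho\supseteq H$ is transitive, and it is primitive because every block of $\Im\rho$ is a fortiori a block of the primitive subgroup $H$; Theorem \ref{thm:1.2} then yields a branched covering over $\mathbb{R}P^{2}$ with connected covering surface and datum $\mathscr{D}$, which is indecomposable by Proposition \ref{yo}.

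For the existence of a full-cycle factorization I would argue via Riemann--Hurwitz. Writing $p,q$ for the numbers of parts of $D_1,D_2$, we have $p+q=2d-\nu(\mathscr{D})$, which is even and, since $\nu(\mathscr{D})\ge d+1$, satisfies $p+q\le d-1$. These are exactly the parity and genus conditions under which $\{D_1,D_2,[d]\}$ is realizable over $S^2$ by a connected covering of genus $g=(\nu(\mathscr{D})-d+1)/2\ge1$, and for a branch datum containing the full cycle $[d]$ these conditions are also sufficient, as an explicit construction shows: a chain of cycles sharing consecutive endpoints, as in $(1\,\cdots\,a_1)(a_1\,\cdots)\cdots$, multiplies to a single $d$-cycle and realizes the minimal-genus pattern $p+q=d+1$; in our situation $p+q\le d-1$ forces $g\ge1$, which is achieved by letting the prescribed cycles share two or more points, each such handle contributing one unit of genus while keeping the product a single $d$-cycle. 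The resulting $\alpha_1\in D_1,\ \alpha_2\in D_2$ satisfy $\alpha_1\alpha_2\in[d]$, so $H$ is automatically transitive.

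The delicate point, and the one I expect to be the main obstacle, is primitivity; it is here that the strict inequality $\nu(\mathscr{D})>d$ (rather than merely $\nu(\mathscr{D})\ge d-1$) is decisive. Once $\alpha_1\alpha_2$ equals the standard cycle $(1\,2\,\cdots\,d)$, any nontrivial block system of $H$ is invariant under this $d$-cycle and must therefore be the partition of $\{1,\dots,d\}$ into residue classes modulo some divisor $m\mid d$ with $1<m<d$; so it is enough to rule these out. The cleanest route I would attempt is to use the room provided by $g\ge1$ to arrange that $H$ contains a cycle of length $\ell$ with $\gcd(\ell,d)=1$ and $\ell$ larger than every proper divisor of $d$ (for instance $\ell=d-2$), whence $H$ is primitive by Example \ref{referee}. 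Producing such a cycle as a word in $\alpha_1,\alpha_2$ while respecting the prescribed types and the single-cycle product, uniformly over all admissible profiles, is exactly where the work lies. I anticipate that the extremal profiles demand separate attention: those in which every part of $D_1,D_2$ is $\ge3$ (no transposition to exploit) and those in which the parts are as small as possible, namely strings of $2$'s together with a single $1$ — the odd-degree counterpart of the exceptional data $[2,\dots,2]$ of Theorem \ref{even}. For these I would fall back on the symbol-exchange device from the proof of Proposition \ref{d impar 1}, moving a single symbol between two cycles of $\alpha_1$ so as to destroy the residual residue-class symmetry without altering the cycle types, the surplus defect guaranteeing enough independent cycles to carry this out.
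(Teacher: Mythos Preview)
Your reduction to the monodromy picture is correct, and so is the disposal of the case $D_i=[d]$ via Proposition~\ref{d impar 1}. The existence of $\alpha_1\in D_1,\alpha_2\in D_2$ with $\alpha_1\alpha_2$ a $d$-cycle is not a problem either: it is exactly Lemma~\ref{algEKS}, since $\nu(D_1)+\nu(D_2)\ge d+1>d-1$ and $\nu(\mathscr D)\equiv d+1\pmod2$. The genuine gap is the primitivity step. You correctly note that once $\alpha_1\alpha_2=(1\,2\,\cdots\,d)$ the only candidate block systems are the residue classes modulo a proper divisor of $d$, and you propose to rule these out by exhibiting an $\ell$-cycle (say $\ell=d-2$) \emph{as a word} in $\alpha_1,\alpha_2$. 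But no mechanism for producing such a word is given, and the pair delivered by Lemma~\ref{algEKS} can perfectly well generate an imprimitive group: for $d=9$, $D_1=D_2=[3,3,3]$, take
\[
\alpha_1=(1\,4\,7)(2\,5\,8)(3\,6\,9),\qquad \alpha_2=(1\,3\,2)(4\,5\,6)(7\,8\,9);
\]
then $\alpha_1\alpha_2=(1\,5\,9\,2\,6\,7\,3\,4\,8)$ is a $9$-cycle, yet both $\alpha_i$ preserve the partition $\{1,2,3\}\cup\{4,5,6\}\cup\{7,8,9\}$, so $\langle\alpha_1,\alpha_2\rangle$ is imprimitive and contains no $7$-cycle at all. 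Thus the ``symbol-exchange'' adjustment must be built into the construction of $\alpha_1,\alpha_2$ from the start, not applied post hoc, and your sketch does not indicate how to do this uniformly.

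The paper resolves the difficulty by changing the target rather than by patching the $d$-cycle construction. Instead of asking for $\alpha_1\alpha_2\in[d]$ and then fighting for primitivity, Lemma~\ref{fundlem2} produces $\alpha_1\in D_1,\alpha_2\in D_2$ with $\alpha_1\alpha_2$ a $(d-2)$-cycle and $\langle\alpha_1,\alpha_2\rangle$ transitive. Since $d$ is odd, $d-2$ is odd too, so the square root needed for the Hurwitz representation still exists (Example~\ref{square}), and now the product itself is the witness that forces primitivity via Example~\ref{referee}. The price is that two points are fixed by $\alpha_1\alpha_2$ yet must still lie in a single orbit of $\langle\alpha_1,\alpha_2\rangle$; arranging this is strictly harder than Lemma~\ref{algEKS} and is the real content of the paper's Lemma~\ref{fundlem2}, which proceeds by a careful case analysis (on the largest parts of $D_1,D_2$), peeling off a small initial factor $\beta_0$ that creates the two fixed points and then invoking Lemma~\ref{algEKS} in $\Sigma_{d-2}$ to finish. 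Your instinct that a $(d-2)$-cycle is the key is right; the missing idea is to make it the product $\alpha_1\alpha_2$ itself rather than to search for it inside the group afterwards.
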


 To show the about result the main tool is the Lemma \ref {fundlem2}, which can be useful for other aplications.  Several of the arguments used in the proof of this Lemma are 
 similar to some arguments which appear in the proof of the 
 lemma below:

\begin{Lemma}[Corollary 4.4 \cite{EKS}]\label{algEKS}
Let $\overline{D}_{1},\overline{ D}_{2}$ be partitions of $\overline{d}\in \mathbb{N}$  such that $\nu(\overline{D}_{1})+\nu(\overline{D}_{2})\geq \overline{d}-1$ and
$\nu(\overline{D}_{1})+\nu(\overline{D}_{2})\equiv \overline{d}+1 \pmod{2}$.
Given $\overline{\lambda} \in \overline{D}_{1}$ there exists $ \overline{\beta}\in \overline{D}_{2}$ such that $\overline{\lambda}\,\overline{\beta}$ is a $\overline{d}$-cycle
(and hence $\langle \overline{\lambda}, \overline{\beta}\rangle$ acts transitively on $\{1,\dots,\overline{d}\}$). \qed
\end{Lemma}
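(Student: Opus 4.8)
The plan is to first remove the universal quantifier over $\overline\lambda$ by conjugation, and then to prove the existence of a single good pair by induction on $\overline d$. Since any two elements of $\overline D_1$ are conjugate in $\Sigma_{\overline d}$, it suffices to produce one pair $(\lambda_0,\beta_0)\in\overline D_1\times\overline D_2$ with $\lambda_0\beta_0$ a $\overline d$-cycle: for an arbitrary $\overline\lambda=\sigma\lambda_0\sigma^{-1}$ one takes $\overline\beta:=\sigma\beta_0\sigma^{-1}\in\overline D_2$, and then $\overline\lambda\,\overline\beta=\sigma(\lambda_0\beta_0)\sigma^{-1}$ is again a $\overline d$-cycle (and $\langle\overline\lambda,\overline\beta\rangle$, containing a $\overline d$-cycle, is automatically transitive, which gives the parenthetical conclusion). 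Before constructing such a pair I would record why the two hypotheses are exactly the right ones. The sign of a permutation of cyclic structure $D$ is $(-1)^{\nu(D)}$, so $\mathrm{sgn}(\lambda_0\beta_0)=(-1)^{\nu(\overline D_1)+\nu(\overline D_2)}$ while a $\overline d$-cycle has sign $(-1)^{\overline d-1}$; equality of signs is precisely the congruence $\nu(\overline D_1)+\nu(\overline D_2)\equiv\overline d+1\pmod2$. The inequality is the counting constraint of the transposition calculus: right multiplication by a transposition merges two cycles of a permutation (if its two points lie in different cycles) or splits one cycle (otherwise), changing the number of cycles by $\pm1$. Writing $\beta_0$ as a product of $\nu(\overline D_2)$ transpositions and applying them successively to $\lambda_0$, which has $\overline d-\nu(\overline D_1)$ cycles, one reaches a single cycle with $m$ merges and $s$ splits where $m+s=\nu(\overline D_2)$ and $m-s=\overline d-1-\nu(\overline D_1)$; these are nonnegative integers exactly when $\nu(\overline D_1)+\nu(\overline D_2)\ge\overline d-1$ and the parity holds.

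For the existence I would induct on $\overline d$, the base cases $\overline d=1$ and ``one of the partitions is $[1,\dots,1]$'' being immediate (the latter forces the other to be $[\overline d]$). The engine of the induction is the following reduction, available whenever --- after possibly interchanging $\overline D_1$ and $\overline D_2$, which is harmless since $\lambda\beta$ and $\beta\lambda$ are conjugate --- $\overline D_1$ has a part equal to $1$ and $\overline D_2$ has a part $b\ge2$. Delete one part $1$ from $\overline D_1$ and replace one part $b$ of $\overline D_2$ by $b-1$; this yields partitions $\overline D_1^{*},\overline D_2^{*}$ of $\overline d-1$ with $\nu(\overline D_1^{*})+\nu(\overline D_2^{*})=\nu(\overline D_1)+\nu(\overline D_2)-1$, so both hypotheses are preserved. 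By the inductive hypothesis there are $\lambda^{*}\in\overline D_1^{*}$ and $\beta^{*}\in\overline D_2^{*}$ on $\{1,\dots,\overline d-1\}$ with $c^{*}:=\lambda^{*}\beta^{*}$ a $(\overline d-1)$-cycle. Put $\lambda:=\lambda^{*}$, now fixing the point $\overline d$ (which restores the deleted part $1$), and $\beta:=\beta^{*}(r\,\overline d)$, where $r$ is a point of the $\beta^{*}$-cycle that was shortened. Then $\beta\in\overline D_2$, because the transposition merges the isolated point $\overline d$ into that cycle, extending it from length $b-1$ back to $b$; and $\lambda\beta=c^{*}(r\,\overline d)$ is a $\overline d$-cycle, because $r$ and $\overline d$ lie in different cycles of $c^{*}$ and the multiplication merges them. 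Concretely this is the identity $(c_1\,\cdots\,c_{k-1})(c_1\,c_k)=(c_1\,\cdots\,c_k)$.

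The main obstacle is the one case this reduction never reaches: when neither $\overline D_1$ nor $\overline D_2$ has a part equal to $1$, i.e.\ all parts are $\ge2$. Then each permutation has at most $\overline d/2$ cycles, so $\nu(\overline D_1)+\nu(\overline D_2)\ge\overline d$, and by parity in fact $\ge\overline d+1$: we are in the strictly positive-defect (positive ``genus'') regime. The elementary deletion is unavailable here, since every point now lies in a nontrivial cycle of \emph{both} permutations, so removing a point lowers both defects and breaks the parity. I would instead decrease the excess $e:=\nu(\overline D_1)+\nu(\overline D_2)-(\overline d-1)\ge2$ at the level of partitions: decrement one part $\ge2$ of each of $\overline D_1$ and $\overline D_2$ by $2$, obtaining a datum on $\overline d-2$ points. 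A direct check shows the inequality and the parity are preserved while $e$ drops by $2$, so after finitely many steps one lands in a base case or in the minimal situation $e=0$, which necessarily has a part equal to $1$ and is covered by the deletion step. By the inductive hypothesis the smaller datum is realized with product a single $(\overline d-2)$-cycle, and the $\overline d$-point realization is then rebuilt by re-inserting the two removed points into the shortened $\lambda$- and $\beta$-cycles.

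The delicate heart of the argument is precisely this last re-insertion: one must arrange the two new points so that the running product is driven back to a single $\overline d$-cycle \emph{and} the prescribed part sizes are exactly restored, without creating spurious fixed parts. Conceptually, the whole statement is the assertion that a one-face bipartite map with prescribed white- and black-vertex degrees (the parts of $\overline D_1$ and $\overline D_2$) exists under the Euler/parity constraints: in the minimal case $e=0$ it is a bipartite tree realizing the two degree sequences --- which always exists, since both sequences consist of positive integers summing to the number of edges $\overline d=(\#\text{vertices})-1$ --- and each unit of excess corresponds to attaching a handle. I expect the merge/split bookkeeping, rather than any single construction, to be where the real work lies, with the no-fixed-part case being the genuine obstruction that the tree plus handles picture is designed to overcome.
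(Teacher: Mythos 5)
Your preliminary reductions are all correct: conjugation disposes of the universal quantifier over $\overline{\lambda}$; the sign and merge/split counting arguments correctly identify the two hypotheses as necessary; and your fixed-point deletion step (delete a part $1$ from one partition, shorten a part $b\ge2$ of the other to $b-1$, re-merge via $c^{*}(r\;\overline{d})$) is sound. But there is a genuine gap exactly in the case you yourself isolate as the heart of the matter, where every part of both partitions is $\ge2$, and it is a double gap. First, the bookkeeping of your decrement-by-$2$ move is wrong: lowering a part $c\ge3$ to $c-2$ lowers the defect by $2$, but removing a part $c=2$ lowers it by only $1$. Hence the mixed move (a part $2$ of one partition against a part $\ge3$ of the other) changes $\nu(\overline{D}_1)+\nu(\overline{D}_2)$ by $3$ while $\overline{d}$ drops by $2$, destroying the parity hypothesis; and there are data in your hard case for which \emph{no} choice of parts works, namely whenever one partition consists only of $2$'s and the other has no part $2$. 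For example $\overline{D}_1=[2,2,2]$, $\overline{D}_2=[3,3]$, $\overline{d}=6$ satisfies both hypotheses (defect sum $7$), yet every decrement-by-$2$ pairing yields a datum on $4$ points with defect sum $4$, of the wrong parity; the lemma itself is true there, since $(1\,2)(3\,4)(5\,6)\,(1\,3\,5)(2\,4\,6)=(1\,4\,5\,2\,3\,6)$ is a $6$-cycle. The family $\overline{D}_1=[2,\dots,2]$, $\overline{D}_2=[5,5]$ behaves the same way, so this is not an isolated accident but a structural failure of the reduction.

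Second, even where the reduction does preserve the hypotheses, you never actually carry out the re-insertion of the two deleted points: you defer it as ``the delicate heart'' and then appeal to the existence of a one-face bipartite map with prescribed white and black vertex degrees under the Euler/parity constraints. That existence statement \emph{is} the lemma, restated in the language of constellations, so invoking it is circular; only its genus-zero instance (the bipartite tree, $e=0$) is elementary, and that is precisely the case your deletion step already handles. So as written your argument proves the lemma essentially only for minimal defect, whereas the paper needs it with positive excess (it is invoked in the second and third cases of the proof of Lemma \ref{fundlem2} with $\nu(\overline{D}_1)+\nu(\overline{D}_2)=(\overline{d}-1)+(r-2)$ and $r$ possibly $>2$). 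For comparison: the paper gives no proof of Lemma \ref{algEKS} at all --- it is quoted with a \emph{qed} from Corollary 4.4 of \cite{EKS} --- and the argument there (echoed in Subcase 1 of the paper's proof of Lemma \ref{fundlem2}) is not an induction on $\overline{d}$ but a direct greedy construction: the cycles of $\overline{\beta}$ are built one at a time, each new cycle taking one point from the support of each of several distinct cycles of the running product $\overline{\lambda}\overline{\beta}_1\cdots\overline{\beta}_i$, so that each multiplication only merges cycles and the count comes out to a single $\overline{d}$-cycle. Repairing your induction would require either such a direct construction in the all-parts-$\ge2$ case or a different elementary move there; as it stands, the proposal is incomplete.
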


 Althought the proof of Lemma \ref {fundlem2}
 is more elaborate. The above Lemma is not sufficient to get our Lemma.

It is worth to say that  due to Proposition \ref {d impar 1} and Remark \ref {rem:2.5}, $\mathscr{D}=\{D_{1},D_{2}\}$ can be assumed to be a nonorientable-admissible datum
such that $d>1$ is odd and non-prime, moreover $\nu(D_{i})<d-1$ and, hence $d-1<\nu(\mathscr{D})<2(d-1)$. Since $\nu(\mathscr{D})$ is even and $d$ is odd then $\nu(\mathscr{D})>d.$ 
Moreover, since $d$ is a non-prime odd integer then $d \geq 9$. But the case $d=9$ is completely solved in \cite{BN&GDL2}. Because of this, it suffices to study the case $d>9$.
Nevertheless we give a complete proof for all $d\geq 3$ based on Lemma \ref {fundlem2}, which is interesting in its own right.
  
\begin{notation}
With  notation and conditions   of Lemma \ref{algEKS}, we will write $$\overline{\beta}:=EKS(\overline{d},\overline{\lambda},\overline{D_{2}})$$ to mean:
$\overline{\beta}$ is the permutation obtained by applying Lemma \ref{algEKS}. The notation ``EKS'' comes from the first letters of the names of
 the authors of \cite{EKS}.
\end{notation}

\begin{Rm}\label{pi}
Let $\Omega$ be a set with $d$ elements and suppose $\overline{\Omega} \subset \Omega$ be a proper subset with $\bar{d}$ elements, i.e. $\bar{d}<d$. Notice that there exists a projection (not a homomorphism):
\begin{eqnarray}
\begin{matrix}
\wp: &\Sym_{\Omega}& \rightarrow &\Sym_{\overline{\Omega}}\\
& \lambda & \longmapsto & \wp(\lambda)
\end{matrix}
\end{eqnarray}
where $\wp(\lambda)$ is obtained from $\lambda$ by deleting the elements of $\Omega \setminus \overline{\Omega}$ in the cyclic decomposition of $\lambda$ (see beginning of subsection \ref{subsec:permut}).
There exists also a monomorphism:
\begin{eqnarray} \label {eq:mono}
\begin{matrix}
\imath: &\Sym_{\overline{\Omega}}& \rightarrow &\Sym_{\Omega}\\
& \bar{\beta} & \longmapsto & \imath(\bar\beta)
\end{matrix}
\end{eqnarray}
such that
\begin{eqnarray}
[\imath( \bar{\beta})](i):= \left\{ \begin{array}{ll}
i, & \textrm{ if $i \notin \overline{\Omega}$},\\
 \bar{\beta}(i), & \textrm { if $i \in \overline{\Omega}.$}
 \end{array}\right.
 \end{eqnarray}
 Consider $\lambda \in \Sym_{\Omega}$ and $\bar{\beta} \in \Sym_{\overline{\Omega}}$. We want to understand how to obtain $\lambda \imath(\bar{\beta})$ from $\wp(\lambda)\bar{\beta}$ and $\lambda$.
 Suppose  $\overline{\Omega}:=\{{\bf w}_1,\dots,{\bf w}_{\bar{d}}\}$ and
 $$\lambda=({\bf w}_1\; \underbrace{u_{1,1} \dots u_{1,t_{1}}}_{S_1}\; {\bf w}_2 \; \underbrace{u_{2,1}\dots u_{2,t_2}}_{S_2} \dots {\bf w}_{\bar{d}}\;\underbrace{u_{\bar{d},1} \dots u_{\bar{d},t_{\bar{d}}}}_{S_{\bar{d}}})$$
 where for $i=1,\dots,\bar{d}$ we have $u_{i,j}\in\Omega\setminus\overline\Omega$ for $j=1,\dots,t_i$ and
 $t_i\geq 0$, i.e. the sequences $S_i=u_{i,1}\dots u_{i,t_i}$ can be empty and $\bar{d}+\sum_{i=1}^{\bar{d}}t_i\leq d.$
 Then $$\wp(\lambda)=({\bf w}_1\; {\bf w}_2 \dots {\bf w}_{\bar{d}}).$$
 Suppose also that $$\wp(\lambda)\bar{\beta}=({\bf w}_{\phi(1)}\dots  {\bf w}_{\phi(v_1)})({\bf w}_{\phi(v_1+1)}\dots {\bf w}_{\phi(v_1+v_2)})\dots ({\bf w}_{\phi(v+1)}\dots {\bf w}_{\phi(v+v_x)})$$
 where $ v:=v_1+v_2+\dots+v_{x-1}$, $ v+v_x=\bar{d}$ and $\phi \in \Sym_{\bar{d}}.$ Then
 \begin{eqnarray}\label{Omega}
 [\lambda \imath(\bar{\beta})](q)= [\imath(\bar{\beta})](\lambda(q))=\left\{ \begin{array}{ll}
\bar{\beta}(\lambda(q)), & \textrm{ if $\lambda(q) \in \overline{\Omega}$},\\
 \lambda(q), & \textrm { if $\lambda(q) \notin \overline{\Omega},$}
 \end{array}\right.
 \end{eqnarray}
but for $\lambda(q)\in\overline\Omega$ we have
\begin{eqnarray*}
 \bar{\beta}(\lambda(q))=[\wp(\lambda)]^{-1}
[\wp(\lambda)\bar{\beta}](\lambda(q))=
\end{eqnarray*}
\begin{eqnarray} \label {eq:10}
[\wp(\lambda)\bar{\beta}] ([\wp(\lambda)]^{-1}(\lambda(q)))=\left\{ \begin{array}{ll}
{[\wp(\lambda)\bar{\beta}](q)}, & \textrm{if $q \in \overline{\Omega},$}\\
{[\wp(\lambda)\bar{\beta}]({\bf w}_{l})}, & \textrm{if $q \notin \overline{\Omega}$ and $q \in S_l.$}
 \end{array}\right.
 \end{eqnarray}
Hence
$$\lambda \imath(\bar{\beta})=({\bf w}_{\phi(1)}\; S_{\phi(1)}\dots {\bf w}_{\phi(v_1)}\; S_{\phi(v_1)})
({\bf w}_{\phi(v_1+1)}\; S_{\phi(v_1+1)}\dots{\bf w}_{\phi(v_1+v_2)}\;S_{\phi(v_1+v_2)})\dots$$$$\dots ({\bf w}_{\phi(v+1)}\;S_{\phi(v+1)}\dots {\bf w}_{\phi(v+v_x)}\;S_{\phi(v+v_x)}).$$
Thus we obtain the cyclic decomposition of $\lambda \imath(\bar{\beta})$ from the cyclic decomposition of $\wp(\lambda)\bar{\beta}$ by replacing each ${\bf w}_{i}$ (determined via $\lambda$) by ${\bf w}_i S_i,$ for $i=1,\dots,\bar{d}.$

If $\lambda$ is a product of several disjoint cycles, we use the procedure from above on each cycle to obtain $\lambda \imath(\bar \beta)$. Namely,    we first
define the sequence $S_i$ which is the consecutive of $\bf{w}_i$, using the
procedure above to the cycle
  which contains ${\bf w}_i$. Then define     $\lambda \imath(\bar \beta)$ as above.  Moreover, if $\lambda$ contains cycles whose elements are totally contained in $\overline{\Omega}$, by (\ref{Omega}) and (\ref {eq:10}) the corresponding cyclies of $\wp(\lambda)\bar{\beta}$ are cycles of $\lambda \imath(\bar{\beta})$.
\end{Rm}

\begin{Lemma} \label {fundlem2}
Let $\mathscr{D}=\{D_1,D_2\}$ be a pair of partitions of an odd integer
$d\ge3$ such that $d-1<\nu(\mathscr{D})\equiv0\pmod2$.
Given a permutation $\lambda\in D_1$, there exists $\beta\in D_2$ such that $\lambda\beta$ is a $(d-2)$--cycle and the
permutation group $G:=\langle\lambda,\beta\rangle\leq\Sigma_d$ is transitive. For any such a collection of permutations, the permutation group $G$ is primitive.
\end{Lemma}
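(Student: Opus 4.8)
The statement comprises two essentially independent claims: an existence claim, and a primitivity claim valid for every admissible pair. I would dispatch primitivity first, since it uses only that $\lambda\beta$ is a $(d-2)$--cycle and that $G$ is transitive. Because $d$ is odd, $\gcd(d-2,d)=\gcd(d-2,2)=1$; and since the least prime divisor of $d$ is at least $3$, every proper divisor of $d$ is at most $d/3$, which is strictly less than $d-2$ whenever $d>3$. Thus $d-2$ exceeds every non-trivial divisor of $d$, and the argument of Example \ref{referee} applies directly to the transitive group $G$ containing the $(d-2)$--cycle $\lambda\beta$: a non-trivial block either engulfs the whole $(d-2)$--point support together with one of the two fixed points, forcing its size to be at least $d-1$ and hence equal to $d$, or it is contained in the support or in the two--point fixed set, forcing its size to divide both $d-2$ and $2$ and hence to be $1$. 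Either way every block is trivial, so $G$ is primitive. The degenerate value $d=3$ is immediate, as any transitive group on a prime number of points is primitive.

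For existence I would combine Lemma \ref{algEKS} with the projection--reinsertion calculus of Remark \ref{pi}. Note first that both $D_1$ and $D_2$ are non-trivial: if either were $[1,\dots,1]$ then $\nu(\mathscr{D})$ would be at most $d-1$, contradicting $\nu(\mathscr{D})>d-1$. Hence $\lambda$ has non-fixed points, and I would select two distinct symbols $p,q$ moved by $\lambda$, chosen so as not to form a single $2$--cycle of $\lambda$, to serve as the two fixed points of the target product. Requiring $\lambda\beta$ to fix $p$ and $q$ prescribes $\beta$ on the two symbols $p^\lambda$ and $q^\lambda$, namely $p^\lambda\mapsto p$ and $q^\lambda\mapsto q$, leaving the remaining $d-2$ symbols free. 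Setting $\overline\Omega:=\Omega\setminus\{p,q\}$, $\overline d:=d-2$ and $\overline\lambda:=\wp(\lambda)$, I would apply Lemma \ref{algEKS} to obtain $\overline\beta:=EKS(\overline d,\overline\lambda,\overline{D}_2)$ with $\overline\lambda\,\overline\beta$ a full $(d-2)$--cycle, where $\overline{D}_1,\overline{D}_2$ are the partitions of $\overline d$ induced from $D_1,D_2$ by deleting $p,q$. Finally I would reinsert $p,q$ into the cycles of $\overline\beta$ at the positions dictated by $\lambda$ and the prescribed transitions, using formulas (\ref{Omega})--(\ref{eq:10}), so that the resulting $\beta$ has cyclic structure exactly $D_2$ while $\lambda\beta$ fixes $p,q$ and restricts on $\overline\Omega$ to the $(d-2)$--cycle $\overline\lambda\,\overline\beta$.

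Two verifications then remain. First, the hypotheses of Lemma \ref{algEKS}: deleting a symbol from a non-trivial cycle lowers the defect of a partition by exactly $1$, so with $p,q$ chosen on non-trivial cycles (which forces $\beta(p)\ne p$ and $\beta(q)\ne q$ as well) one gets $\nu(\overline{D}_1)+\nu(\overline{D}_2)=\nu(\mathscr{D})-4\geq (d+1)-4=d-3=\overline d-1$, with the same parity as $\nu(\mathscr{D})\equiv 0\equiv \overline d+1\pmod 2$; exceptional deletions, such as emptying a $2$--cycle, cost less and are absorbed by an alternative choice of $\{p,q\}$. Second, transitivity of $G$: the $(d-2)$--cycle $\lambda\beta$ acts transitively on $\overline\Omega$, and since $p,q$ were chosen to be moved by $\lambda$ into $\overline\Omega$ and not to constitute a pair invariant under both $\lambda$ and $\beta$, the points $p$ and $q$ lie in the same $G$--orbit as $\overline\Omega$, so $G$ is transitive on all of $\Omega$.

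The main obstacle---and the reason Lemma \ref{algEKS} alone does not suffice---is to force $\lambda\beta$ to have two genuine fixed points rather than degenerating into a single $d$--cycle, while simultaneously matching the cyclic structure of the reinserted $\beta$ to $D_2$ exactly. This couples the placement of $\{p,q\}$, the admissible reinsertion positions governed by (\ref{Omega})--(\ref{eq:10}), and the EKS hypotheses into a single constraint, and I expect it to demand a genuine case analysis on the shapes of $D_1$ and $D_2$ (availability of fixed points, of cycles of length at least $3$, and the parities of $\nu(D_1)$ and $\nu(D_2)$), in the spirit of the case split in Proposition \ref{d impar 1}. I would treat the small values of $d$ separately---$d=3$ being trivial and $d=9$ cross-checked against \cite{BN&GDL2}---and use them to anchor the general construction for $d>9$.
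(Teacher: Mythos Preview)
Your primitivity argument is correct and is precisely the paper's: it is Example \ref{referee} applied with $l=d-2$, and the paper invokes it (in the Fundamental Lemma) exactly as you do.

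Your existence outline has the right architecture---remove two points, apply the EKS lemma on $\overline d=d-2$, reinsert---and this is indeed how the paper proceeds (phrased as $\beta=\beta_0\,\imath(\overline\beta)$ with a short ``starter'' $\beta_0$ creating the two fixed points). But you have not closed the central gap you yourself flag. The reinsertion positions are forced: $p$ must be inserted immediately after $\lambda(p)$ in the cycle of $\overline\beta$ that contains $\lambda(p)$, and similarly for $q$. Hence the cycle type of the resulting $\beta$ depends on \emph{which} cycles of $\overline\beta$ happen to contain $\lambda(p)$ and $\lambda(q)$, and Lemma \ref{algEKS} gives no control over this. Your parenthetical ``which forces $\beta(p)\ne p$ and $\beta(q)\ne q$ as well'' is not the issue (that much is automatic); what is not automatic is that the two augmented cycles have the right lengths to land $\beta$ in $D_2$. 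The paper's proof is essentially devoted to overcoming this: it proves a strengthened EKS statement (Step~2 of the first case) guaranteeing that $\overline\beta$ can be chosen with a \emph{prescribed} element $a_{1,3}$ in its $(d_1-2)$--cycle, and this requires a constructive re-derivation of the EKS algorithm with careful bookkeeping (the inequality (\ref{sobra})) plus a further two-level case split on the excess $r=\nu(\mathscr D)-(d-1)$ and on whether $d_1-2$ survives into the auxiliary partition $\overline D_{2,1}$. The global case analysis is on $(c_1,d_1)$---namely $d_1\ge3$ with $c_1+d_1>6$; $c_1=d_1=3$; $d_1=2$---not on parities or fixed points as you suggest, and the $c_1=d_1=3$ branch is handled partly by an explicit table for $d\le11$. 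Without some mechanism to pin an element of $\overline\Omega$ into a designated cycle of $\overline\beta$, your scheme cannot guarantee $\beta\in D_2$, and that mechanism is the substantive content of the lemma.
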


 \begin{proof}
In the case of $d=3$, we have $D_1=D_2=[3]$, and $\beta=\lambda^{-1}$ has the desired
properties. From now on, we will assume that $d>3$.

Let $\nu(\mathscr{D})=(d-1)+r$, with $r>0$ even. Due to Proposition \ref {d impar 1}, we can suppose
\begin{eqnarray}
D_{1}=[c_{1},c_{2},\dots,c_{t}], \quad D_{2}=[d_{1},d_{2},\dots,d_{s},\underbrace{1,\dots,1}_{\ell}],\textrm{ with}\\
1<t<d,\quad 1<s+\ell<d,\quad d-1>\nu(D_{1})\geq\nu(D_{2}), \textrm{ and}\\
c_{1}\geq c_{2}\geq \dots \geq c_{t} ,\quad  d_{1}\geq d_{2}\geq \dots \geq d_{s}>1.
\end{eqnarray}
Let $\Omega=\{1,\dots,d\}$
 and $\lambda \in \Sym_{\Omega}$ be a permutation with the cyclic decomposition
 $\lambda=\lambda_{1}\dots\lambda_{t} \in D_{1}$,  where $\lambda_{i}:=(a_{i,1}\;a_{i,2}\;\dots\;a_{i,c_{i}})$
is a $c_{i}$-cycle, for $i=1,\dots,t$.

Since $\nu(D_{1})+\nu(D_{2})\ge d+1$ with $\nu(D_{1})\geq\nu(D_{2})$, we have
\begin{eqnarray} \label {eq:9}
\nu(D_{1})\ge(d+1)/2, \textrm{ and } c_{1}\geq 3.
\end{eqnarray}

Here is the plan of the proof.
In order to construct $\beta \in D_{2}$, we will divide the problem into three cases. In
each case we want to define the first cycle $\beta_1$ in a cyclic decomposition
$\beta=\beta_1\dots\beta_s$ of $\beta$, in order to guarantee that
$|\Fix(\lambda\beta)|\ge2$ (i.e.\ there exist at least two fixed elements
of the permutation $\lambda\beta$). Then we want to study a related problem in the
symmetric group of order $d-2$. Namely, we need a stronger version of the Lemma \ref{algEKS}.
 
If we are able to make the construction in such a way that
the first cycles $\overline{\lambda}_1$ and $\overline{\beta}_1$ in a solution of this related problem have
supports with non-empty intersection, then we are able to solve the original problem in
the symmetric group of order $d$.\\

{\bf First case:} $ c_{1}+d_{1} > 6$ and $d_{1}\geq 3$.

{\it Step 1.} Define $$\beta_{0}:=(a_{1,3}\;a_{1,2}\;a_{1,1}), \textrm{ then } \lambda \beta_{0}=(\overline{a_{1,1}})(\overline{a_{1,2}})(a_{1,3}\; \dots \; a_{1,c_{1}})\lambda_{2}\dots \lambda_{t},$$
where $\overline{a_{i,j}}$,  means ``used elements" in the sense that they will be
elements in the support of the $d_{1}$-cycle $\beta_1$ of $\beta$ and we cannot use them
to define other cycles of $\beta$. Let $\overline{d}:=d-2$ and let
$$
\overline{D}_{1}:=[c_{1}-2,c_{2},\dots,c_{t}],\quad\overline{D}_{2}:=[d_{1}-2,d_{2},\dots,d_{s},\underbrace{1,\dots,1}_{\ell}]
$$
be partitions of $\overline{d}$. Then
     \begin{eqnarray}\label{b}
 \nu(\overline{D}_{1})+\nu(\overline{D}_{2})=\nu(\mathscr{D})-4=(\overline{d}-1)+(r-2)\equiv \overline{d}+1 \pmod{2}.
     \end{eqnarray}
Put $\overline{\Omega}:=\Omega\setminus\{a_{1,1},a_{1,2}\}$.
Then 
\begin{equation} \label {eq:cyc:dec}
\wp(\lambda\beta_0)=\wp(\lambda)=\wp(\lambda_1)\lambda_2\cdots \lambda_t
\end{equation}
is a cyclic decomposition of $\wp(\lambda)$.

{\it Step 2.} On this step, we will work in the permutation group $\Sym_{\overline \Omega}$. We will prove that the permutation $\bar\beta\in\overline D_2$ from Lemma \ref {algEKS} can be chosen in such a way that $a_{1,3}$ belongs to the support of a $(d_1-2)$-cycle of a cyclic decomposition of $\bar\beta$. Consider two subcases.

{\it Subcase 1:}
$d_1\ge3$ and $r=2.$
 In this subcase, $\nu(\overline{D}_{1})+\nu(\overline{D}_{2})=\overline{d}-1$  and $t=\nu(\overline{D}_{2})+1,$ hence
$$
t=(d_{1}-2)+(d_{2}-1)+\dots+(d_{s}-1)
$$
and this suggests how to define $\bar{\beta}\in \overline {D}_{2}$ such that $\wp(\lambda)\bar{\beta}$ will be a $\bar{d}$-cycle. First we define the $(d_1-2)$-cycle $\bar
\beta_1$ of $\bar{\beta}$ by using for its support the element $a_{1,3}$ and one element
in $\Supp(\lambda_i)$ for $i=2,\dots,d_1-2$, if $d_1-1>3$ or  $\overline{\beta}_1=(a_{1,3})$ if $d_1=3.$
So in the product $\wp(\lambda)\bar{\beta_1}$, the elements of the supports of the first $(d_1-2)$ cycles in the cyclic decomposition (\ref {eq:cyc:dec}) form the support of a
($(\sum_{j=1}^{d_{1}-2}c_{j})-2$)-cycle $\Lambda_{1}$. In other words, $\wp(\lambda)\bar{\beta_1}$ will have
the cyclic decomposition of
the form
 $$
\wp(\lambda)\bar\beta_1 = \Lambda_1 \lambda_{d_1-1}\dots\lambda_t,
 $$
with the number of cycles being $d_{2}+(d_3-1)\dots+(d_{s}-1)$, 
and $\Supp(\Lambda_1)=\Supp(\wp(\lambda_1))\cup\cup_{k=2}^{d_1-2}\Supp(\lambda_k)$.

Suppose that $1\le i\le s-1$. Let $k_{i}:=(d_{1}-2)+\sum_{j=2}^{i}(d_{j}-1)$ and
$m_{i}:=(c_1-2)+\sum_{l=2}^{k_{i}} c_{l}$. Suppose that we have constructed a
$(d_1-2)$-cycle $\bar\beta_1$ and $d_j$-cycles $\bar\beta_j$ with $2\le j\le i$ having
the following properties: a cyclic decomposition (see subsection \ref{subsec:permut}) of
the permutation $\wp(\lambda)\bar\beta_1\dots\bar\beta_j$ has the form
\begin{equation}\label {eq:*}
\wp(\lambda)\bar\beta_1\dots\bar\beta_j = \Lambda_j
\lambda_{k_j+1}\dots\lambda_{k_{j+1}}\dots\lambda_t, \quad 1\le j \le i,
\end{equation}
and
\begin{equation}\label {eq:**}
\Supp (\bar\beta_j) \cap \Supp (\bar\beta_k) = \varnothing , \quad 1\le j < k \le i,
\end{equation}
where $\Lambda_j$ is a $m_j$-cycle with
$\Supp(\Lambda_j)=\Supp(\wp(\lambda_1))\cup\cup_{k=2}^{k_j}\Supp(\lambda_k)$. We want to
determine a $d_{i+1}$-cycle $\bar\beta_{i+1}$ with the similar properties. By using the
procedure of the constructing $\bar\beta_1$, we construct $\bar\beta_{i+1}$ such that its
support contains one element from the support of each of the cycles $\Lambda_i,
\lambda_{k_i+1},\dots\lambda_{k_{i+1}}$, and does not intersect the supports of any of
$\bar\beta_1,\dots,\bar\beta_i$. In order to be able to make this construction for each
$i=1,\dots,s-1$, it is sufficient to show the following inequality:

\begin{eqnarray}\label{sobra}
\vert \Supp(\Lambda_{i}) \setminus \Supp(\bar{\beta}_{1}\dots\bar{\beta}_{i})\vert \ge1,
\end{eqnarray}
where
$$
 \vert \Supp(\Lambda_{i}) \setminus \Supp(\bar{\beta}_{1}\dots\bar{\beta}_{i})\vert
 =m_{i}-(d_1-2+d_2+\dots+d_{i})
 =m_{i}-(k_{i}+i-1)
$$
\begin{eqnarray*}
 =c_1-2+(\sum_{l=2}^{k_{i}}c_{l})-i+1
 =((c_1-3)+\sum_{l=2}^{k_{i}}(c_{l}-1))-i+1. 
\end{eqnarray*}

To prove the inequality 
(\ref{sobra}),
let $n$ be the number of all members of the partition $\overline{D}_1$, which are greater than $1$. We have two cases:
 \begin{enumerate}
  \item if $n\ge k_i$ then $((c_1-3)+\sum_{l=2}^{k_{i}}(c_{l}-1))-i+1=$  $$(c_{1}-3)+\sum_{l=2}^{i}(c_{l}-2)+\sum_{l=i+1}^{k_i}(c_{l}-1) \geq$$
$$\geq 0+0+1=1,$$

\item if $n<k_i$ then $\nu(\overline D_1)=c_1-3+\sum_{l=2}^{n}(c_{l}-1)=c_1-3+\sum_{l=2}^{k_i}(c_{l}-1)$, thus $$((c_1-3)+\sum_{l=2}^{k_{i}}(c_{l}-1))-i+1=$$
$$\nu(\overline{D}_{1})-i+1=$$
$$(s+\ell)-i\geq (s+\ell)-(s-1)=\ell+1 \geq 1,$$
\end{enumerate}
so the inequality (\ref {sobra}) holds.

Define $\bar{\beta}:=\bar{\beta_1}\dots\bar{\beta_s}$, then $k_s=t$ and
$|\Supp(\Lambda_s)|=m_s=(c_1-2)+\sum_{l=2}^{t}c_l=d-2=\bar d$, thus
 \begin{equation} \label {eq:***}
 \wp(\lambda)\bar{\beta}=\Lambda_s
 \end{equation}
is a $\bar{d}$-cycle by (\ref {eq:*}) for $j=i=s$, and a cyclic decomposition of
$\bar\beta$ is $\bar\beta=\bar\beta_1\dots\bar\beta_s$ by (\ref {eq:**}) for $i=s$.

\begin{Rm}
If $S\subset \Fix(\bar{\beta})$, $$\Lambda_s=(u_1 v_1 \dots w_1\;u_2 v_2 \dots w_2 \ \dots \ \dots \ \dots \ u_z v_z \dots w_z\; \ast v_\ast \dots w_{\ast}),$$ where $\{u_1,\dots,u_z\}=S.$ This is going to be useful in {\it Subsubcase 2b}.
\end{Rm}

{\it Subcase 2:} $d_1\ge3$ and $r>2$. 

We start by reordering, in an increasing way, the entries of  $$\overline{D}_{2}=[d_1-2,d_2,\dots,d_s,1,\dots,1],$$ i.e. put
 $$\overline{D}_{2} =[e_1,\dots,e_{\ell},e_{\ell+1},\dots,e_{\ell+s}]$$
 with $1=e_1=\dots=e_{\ell}< e_{\ell + 1} \leq e_{\ell+2} \leq \dots \leq e_{\ell+s}.$  Since $r>2$ and
 $ \nu(\overline{D}_{1})+\nu(\overline{D}_{2})=(\overline{d}-1)+(r-2)$, there is $0\leq k < s$ such that
 $$\nu(\wp(\lambda))+(e_{\ell+1}-1)+\dots +(e_{\ell+k}-1) \leq \bar{d}-1$$ and $$\nu(\wp(\lambda))+(e_{\ell+1}-1)+\dots+(e_{\ell+k}-1)+(e_{\ell+k+1}-1) > \bar{d}-1.$$
 Let $f > 0$ such that
 \begin{eqnarray}\label{a}
\nu(\wp(\lambda))+(e_{\ell+1}-1)+\dots +(e_{\ell+k}-1)+(f-1) = \bar{d}-1.
 \end{eqnarray} Define
 $$\overline{D}_{2,1}:=[e_1,\dots,e_{\ell},e_{\ell+1},\dots,e_{\ell+k},f,\underbrace{1, \dots,1}_{z}],$$ a partition of $\bar{d}$ where $z:=\bar{d}-(\sum_{i=1}^{\ell+k}e_i)-f$ is bigger than zero, and define
$$\overline{D}_{2,2}(j):=[e_{\ell+k+1}, \dots,e_{\ell+j-1}, e_{\ell+j}-f+1,e_{\ell+j+1},\dots,e_{\ell+s}],$$
 a partition of $z+1$, where $j\in\{k+1,\dots,s\}$. Later, we will make a choice of $j$.

Here is the plan of constructing $\bar\beta$ in Subcase 2. 

Firstly, we will define $\beta' \in \overline{D}_{2,1}$ and $\beta'' \in \overline{D}_{2,2}(j)$ such that $\Supp(\beta')\cap\Supp(\beta'')$ consists of only one element denoted by $*\in\overline\Omega$. Secondly, we will put $\bar{\beta}:=\beta'\imath_2(\beta'')$, where $\imath_2$ is a monomorphism from $\Sym_{\underline{\Omega}}$ into $\Sym_{\overline{\Omega}}$, where $\underline{\Omega}$ will be a set with $z+1$ element. Then we will check that $\bar\beta$ has the desired properties.

{\it Defining $\beta'\in \overline{D}_{2,1}$.} By (\ref{a}) we have $\nu(\overline D_1)+\nu(\overline D_{2,1})=\bar{d}-1$. Thus
 \begin{eqnarray}\label{t}
 t=\nu(\overline{D}_{2,1})+1.
 \end{eqnarray}

 By (\ref{t}) we have
 \begin{eqnarray}\label{tt}
 \begin{matrix}
  t=(e_{\ell+1})+(e_{\ell+2}-1)+\dots +(e_{\ell+k}-1)+(f-1),
  \end{matrix}
  \end{eqnarray}
thus the permutation $\wp(\lambda)$ and the partition $\overline{D}_{2,1}$ satisfy the hypothesis of Subcase 1. Consider two subsubcases of Subcase 2.

{\it Subsubcase 2a:} the integer $d_1-2$ is a member of the partition $\overline{D}_{2,1}$, more precisely
$d_1-2=e_{j_0}$ for some $j_0\in\{1,\dots,\ell+k\}$.
We define $\beta'\in\overline{D}_{2,1}$ to be the permutation $\bar{\beta}$ constructed in Subcase 1 via the permutation $\wp(\lambda)$, the partition $\overline{D}_{2,1}$ and its distinguished entry $d_1-2$. 

{\it Subsubcase 2b:} the integer $d_1-2$ is not a member of the partition $\overline{D}_{2,1}$, i.e.\
$d_1-2=e_{j_0}$ for some $j_0>\ell+k$.

We define $\beta'\in\overline{D}_{2,1}$ to be the permutation constructed in Subcase 1 via the permutation $\wp(\lambda)$, the partition $\overline{D}_{2,1}$ and its distinguished entry $f$,
i.e.\ $\beta'\in\overline D_{2,1}$ satisfies the assertions of Lemma \ref {algEKS} and  if $f>1$, $a_{1,3}$ belongs to the support of an $f$-cycle of a cyclic decomposition of $\beta'$, otherwise $a_{1,3}\in \Fix(\beta')$.

In any of the subsubcases {\it 2a} and {\it 2b}, a cyclic decomposition of $\beta'$ has the form $\beta'= \epsilon_1 \dots \epsilon_{\ell+k} \epsilon_f$ where $\epsilon_i$ is a $e_i-$cycle
for $i=1,\dots,\ell+k$ and $\epsilon_f$ is an $f-$cycle with $a_{1,3}\in\Supp(\epsilon_f)$ if $f >1$, otherwise $a_{1,3}\in \Fix(\beta')$.
Let 
$$
F := \{\epsilon_1,\dots,\epsilon_{\ell}\} \subset {\overline{\Omega}}\setminus \{a_{1,3}\} 
$$ 
be the set of $\ell$ elements corresponding to the 1-cycles $\epsilon_1,\dots,\epsilon_{\ell}.$

Remark that, in Subsubcase {\it 2a}, we have $a_{1,3}\in\Supp(\epsilon_{j_0})$, and we can choose any $j$ in order to define $\beta''$.
In Subsubcase {\it 2b}, we have $a_{1,3}\in\Supp(\epsilon_f)$ if $f >1$, otherwise $a_{1,3}\in \Fix(\beta')$,  and define $j:=j_0-\ell$ and $\ast:=a_{1,3}$.

{\it Defining $\beta''\in \overline{D}_{2,2}(j)$.} 
Here, we will define $\beta''\in \overline{D}_{2,2}(j)$ 
as a permutation of the set 
$$
\underline{\Omega}:=\{\ast, u_1,\dots,u_z\}
$$
of $z+1$ elements formed by the disjoint union of the element $\ast$  and the set 
$$
\{u_1,\dots,u_z\}:=\overline{\Omega} \setminus (F\cup\Supp( \epsilon_1 \dots \epsilon_{\ell+k} \epsilon_f)\cup \{a_{1,3}\}).
$$
Notice that
 $\nu(\overline{D}_{2})=\nu(\overline{D}_{2,1})+\nu(\overline{D}_{2,2}(j))$,  so  by (\ref{b}) and
 (\ref{a}) we have
  $\nu(\overline{D}_{2,2}(j))=r-2$.

In order to define $\beta''$, 
consider $$\tau \in \overline{D}_{2,2}(j)=[e_{\ell+k+1}, \dots,e_{\ell+j-1}, e_{\ell+j}-f+1,e_{\ell+j+1},\dots,e_{\ell+s}]$$  in  $\Sym_{\underline{\Omega}}$,
a permutation such that $\ast$ is in its $(e_{\ell+j}-f+1)$-cycle. Since $r>2$ is even, $\beta''$ will be a non-trivial even permutation and we can write $\beta''$  as a product of two $(z+1)$-cycles by Lemma 3.1 of  \cite{EKS}. Put $\tau=\sigma \gamma$, where $\sigma$ and $\gamma$ are $(z+1)-$cycles.

As observed in the end of Step 1, 
 the $\bar{d}$-cycle $\wp(\lambda)\beta'$ has the form
\begin{eqnarray}
(u_1 v_1 \dots w_1\;u_2 v_2 \dots w_2 \ \dots \ \dots \ \dots \ u_z v_z \dots w_z\; \ast v_\ast \dots w_{\ast})
\end{eqnarray}
where $\{v_1, \dots, w_1,v_2, \dots, w_2,\dots \ \dots,v_z, \dots, w_z,v_\ast, \dots, w_\ast\}=\overline\Omega\setminus\underline{\Omega}$. By Lemma 3.3 of \cite{Ez}
there exists $\eta \in \Sym_{\underline{\Omega}}$ such that $\eta \sigma^{-1} \eta^{-1}=(u_1\; u_2\dots u_z\; \ast)$ and $\eta(\ast)=\ast$. Define 
$$
\beta'':=\eta \tau \eta^{-1} \in \overline{D}_{2,2}.
$$

{\it Defining $\bar\beta\in \overline{D}_{2}$.} 
Put $\bar{\beta}:=\beta'\imath_2(\beta''),$ where $\imath_2:\Sym_{\underline{\Omega}}\to\Sym_{\overline{\Omega}}$ is the monomorpism (\ref {eq:mono}). 

Let us check that $\bar\beta\in\overline{D}_2$. Notice that $\beta'$ fix  $\underline{\Omega}\setminus \{\ast\}$,  then the product $\beta'\imath_2(\beta'')$
contains the cycles of $\beta''$ except its  $e_{\ell+j}-f+1$ cycle.    Similarly  $\imath_2(\beta'')$ fix the elements which corresponds to the cycles $\epsilon_i$
for $i=1,\dots,\ell+k$, then the product $\beta'\imath_2(\beta'')$  contains the cycles of $\beta'$ except its  $f-$ cycle. Finally the product restricted to the union
of the elements in the $f-$ cycle of $\beta'$ and the elements in the  $(e_{\ell+j}-f+1)-$ cycle  of $\beta''$ gives the cycle of length $e_{\ell+j}$ of $\bar{\beta}$.

Let us check that $a_{1,3}$ belongs to the support of a $(d_1-2)$-cycle of a cyclic decomposition of $\bar\beta$. 
In subsubcase {\it 2a}, we have $a_{1,3}\in\Supp(\epsilon_{j_0})$, and we can choose any $j$ in order to define $\beta''$.
In subsubcase {\it 2b}, we have $a_{1,3}\in\Supp(\epsilon_f)$ if $f >1$, otherwise $a_{1,3}\in \Fix(\beta')$,  and we define $j:=j_0-\ell$ and $\ast:=a_{1,3}$.

Let us check that $\wp(\lambda)\bar{\beta}$ is a $\bar{d}$-cycle.
The product $ \wp(\lambda)\bar{\beta}=(\wp(\lambda)\beta')\imath_2(\beta'')$ can be described as being obtained from  $(u_1 \dots u_{z}\;\ast)\beta''_2$ by replacing
each $u_j$ and $\ast$ by the sequences $u_j v_j\dots w_j$ and $\ast v_\ast\dots w_\ast$ respectively, occurring in $\wp(\lambda)\bar\beta_1$, as explained in Remark \ref{pi}.
Thus $\wp(\lambda)\bar{\beta}$ is a $\bar{d}$-cycle.

{\it Step 3.} On this step, we show how to construct $\beta\in\Sym_\Omega$ via $\bar\beta\in\Sym_{\overline\Omega}$ constructed on Step 2.
By Step 2, 
a cyclic decomposition of $\bar\beta$ has the form $\bar\beta=\bar\beta_1\dots\bar\beta_s$, for a $(d_1-2)-$cycle $\bar\beta_1$ and $d_j$-cycles $\bar\beta_j$ with $2\le j\le s$, where $a_{1,3}\in\Supp(\bar{\beta}_1)$ or $(a_{1,3})$ is the $1-$cycle which corresponds to $\bar{\beta}_1$.

Put $\beta:=\beta_{0}\imath(\bar{\beta})$. Observe that $\beta_0$ is a 3-cycle and
$\bar\beta_1$ is a $(d_1-2)$-cycle, moreover if $d_1>3$ then $\{a_{1,3}\}=\Supp(\beta_{0}) \cap
\Supp(\imath(\bar\beta_1))$. 
 In any case, 
it follows that $\beta_1:=\beta_{0}\imath(\bar\beta_1)$
is a $d_1$-cycle with
$$
a_{1,3} \in \Supp(\beta_1).
$$
 
Therefore the permutation $\beta =\beta_{0}\imath(\bar{\beta})=\beta_1 \imath(\bar\beta_2
\dots \bar\beta_s) \in D_{2}$, since the cycles $\bar\beta_1,\dots,\bar\beta_s$ are disjoint 
and $\Supp(\beta_{0}) \cap \Supp(\imath(\bar\beta_i))=\varnothing$ for $2\le i\le s$. Observe that $\lambda
\beta_0=\imath(\wp(\lambda))$, which follows via the inclusion
$\Supp(\beta_0)\subseteq\Supp(\lambda_1)$. Therefore, in $\Sym_{\Omega},$ we have 
$$
\lambda\beta=(\lambda \beta_0)\imath(\bar{\beta})=\imath(\wp(\lambda)\bar{\beta})
$$ 
is a $(d-2)$-cycle by Step 2,
as required.\\

     {\bf Second case: $c_1=d_1=3$}

     \begin{itemize}
 \item If $t=2$, since $c_1=3$ and it is the biggest summand of $D_1,$ then $d \leq 6$.

   If $d=5$, from the table in the Appendix, line 1, the result follows.

\item If $t=3$, we have $5\leq d \leq9$.
 If $d=5$, then $D_1=D_2=[3,1,1]$. But in this case $\nu(\mathscr{D})=d-1$, a contradiction with one hypothesis on the total defect. For
 $d=7$ and $d=9$, the possibilities for $D_1, D_2$, as well as the realization for these cases, are given in the table in the Appendix, lines 2, 3, 4 and 5, 6, 7, respectively.
\item If $t\geq 4$   then   we have either
$D_{1}=D_{2}=[3,3,2,1]$, or $D_{1}=[3,3,2,1]$ and $D_{2}=[3,2,2,2]$, or $d=11=\nu(\mathscr{D})-1$, or $c_{i} \geq 2$ for $i=1,2,3,4$.

 For the first two cases,  the result follows from the table in the Appendix, lines 8, 9 and 10.

  Let $d=11=\nu(\mathscr{D})-1$ .  Since $\nu(D_1)\geq (d+1)/2$ then $\nu(D_1)\geq 6$ and $t\leq 5$. Then $t$ is either $4$ or $5$.
The possibilities for $D_1,\; D_2$,  as well the realization for these cases, are given by the table in the Appendix, lines 11 to 19.\\

 So from now on let us assume that $t\geq 4$  and   $c_{i} \geq 2$ for $i=1,2,3,4$.

Define
   $$\beta_{0}:= (a_{1,2}\; a_{1,1}\; a_{*})(a_{2,2}\;a_{2,1}\; a_{3,1})$$ where $a_{*}$ exists just if $d_{2}=3$ and in this case
   $a_{*}:=
    a_{4,1}.$
  Then $\lambda\beta_{0}=$
 
 \begin{small}
  $$=\left\{ \begin{array}{ll}
  (\overline{a_{1,1}})(\overline{a_{1,2}}\; a_{1,3})(\overline{a_{2,1}})(\overline{a_{2,2}} \dots a_{2,c_{2}} \; \overline{a_{3,1}} \dots a_{3,c_{3}})\;\lambda_4  \dots \lambda_{t}, \textrm{ if  $d_{2}=2$,}\\
  (\overline{a_{1,1}})(\overline{a_{1,2}}\; a_{1,3}\;\overline{a_{4,1}} \dots a_{4,c_{4}})(\overline{a_{2,1}})(\overline{a_{2,2}} \dots a_{2,c_{2}} \; \overline{a_{3,1}} \dots a_{3,c_{3}})\;\lambda_5 \dots \lambda_{t}, \textrm{ if  $d_{2}=3$,}\\
  \end{array}\right. $$
 \end{small}
 Notice that $d_{2}>1$, if not $\nu(D_{2})=2$ and $\nu(D_{1})= d-1$, a contradiction with the hypothesis over $\nu(\mathscr{D})$ in the present section.

  Let $\overline{d}:=d-{(d_{1}+d_{2})}$ and let
  $$\overline{D_{1}}:=\begin{cases}
  [(c_{1}-2),(c_{2}-2)+(c_{3}-1),c_4,\dots,c_{t}], \textrm{ if $d_{2}=2$,}\\
[(c_{1}-2)+(c_{4}-1),(c_{2}-2)+(c_{3}-1),c_5,\dots,c_{t}], \textrm{ if  $d_{2}=3$,}\\
  \end{cases} $$ and
  $$\quad  \overline{D_{2}}:=[d_{3},\dots,d_{s},1,\dots,1]$$ be partitions of $\overline{d}$.
  Then $\nu(\overline{D}_{1})+\nu(\overline{D}_{2})=\nu(\mathscr{D})-(d_{1}+d_{2}+2)=(\overline{d}-1)+(r-2)\equiv \overline{d}+1 \pmod{2}$.
  Put $\overline{\Omega}:=\Omega-\{a_{1,1},a_{1,2},a_{2,1},a_{2,2},a_{3,1},a_{*}\}$.

  Consider   in $\Sym_{\overline{\Omega}}:$

  $$\wp(\lambda\beta_{0})=\left\{ \begin{array}{ll}
  ( a_{1,3})(a_{2,3}\dots a_{2,c_{2}} \; a_{3,2} \dots a_{3,c_{3}})\; \lambda_4 \dots \lambda_{t}, \textrm{ if  $d_{2}=2$,}\\
  ( a_{1,3}\; a_{4,2} \dots a_{4,c_{4}})(a_{2,3} \dots a_{2,c_{2}} \; a_{3,2} \dots a_{3,c_{3}})\; \lambda_5 \dots \lambda_{t}, \textrm{ if   $d_{2}=3$}  \end{array}\right. $$
 in $\overline{D_{1}}$   and
 $\overline{\beta}:=EKS(\overline{d},\wp(\lambda \beta_0),\overline{D}_{2})$.
By the Lemma \ref{algEKS},
  $\wp(\lambda) \bar{\beta}$ is a $\overline{d}$-cycle.

 Put $\beta:=\beta_{0}\imath(\bar{\beta})$, thus $\lambda \beta$  will be the $(d-2)$-cycle constructed from $\wp(\lambda \beta_0) \bar{\beta}$
  as explained in Remark \ref{pi}.
 \end{itemize}

{\bf Third case:} $ d_{1} =2$. We have
 $s\geq2$. If not  $\nu(D_{2})=1$ and $\nu(D_{2})>d$, impossible.  If $c_{1}$ is either $3$ or $4$, we have $c_{2}\geq 3$, if not $ \nu(D_{1})\leq (d+1)/2 $ and since $\nu(D_{2})\leq (d-1)/2$ then $\nu(\mathscr{D})\leq d$,
 a contradiction with the hypothesis.
  Define $$\beta_{0}:=(a_{1,1}\; a_{1,2})(a_{*}\; a_{\#}),$$ where
 $$
  a_{*}= \left\{ \begin{array}{ll}
a_{2,2}, \textrm{ if $c_{1}\leq 4$}\\
 a_{1,3}, \textrm { if $c_{1}>4$}
 \end{array}\right.
\\ \textrm{ and  }
 a_{\#}= \left\{ \begin{array}{ll}
a_{2,1}, \textrm{ if $c_{1}\leq 4$}\\
 a_{1,4}, \textrm { if $c_{1}>4$}
 \end{array}\right..
$$
Then $$\lambda\beta_{0}=\begin{cases}
(\overline{a_{1,1}})(\overline{a_{1,2}}\;a_{1,3}\dots a_{1,c_{1}})(\overline{a_{2,1}})(\overline{a_{2,2}} \dots a_{2,c_{2}})\lambda_{3}\dots \lambda_{t}, \textrm{ if $c_{1}\leq 4$,}\\
(\overline{a_{1,1}})(\overline{a_{1,3}})(\overline{a_{1,2}}\; \overline{a_{1,4}} \dots a_{1,c_{1}})\lambda_{2}\dots \lambda_{t},  \textrm{ if $c_{1}>4$.}
\end{cases}.$$
  Let $\overline{d}:=d-4$ and let
  $$
   \overline{D}_{1}:=\begin{cases}
  [c_{1}-2,c_{2}-2,c_3,\dots,c_{t}],&\textrm{ if $c_{1} \leq 4$} \\
[c_{1}-4,c_{2},\dots, c_{t}], &\textrm{ if $c_{1} > 4$}
  \end{cases}
 , \quad\overline{D}_{2}:=[d_{3},\dots,d_{s},1,\dots,1]$$
 be partitions of $\overline{d}.$ Then $\nu(\overline{D}_{1})+\nu(\overline{D}_{2})=\nu(\mathscr{D})-6=(\overline{d}-1)+(r-2) \equiv \overline{d}+1 \pmod{2}$. Put $\overline{\Omega}:=\Omega-\{a_{1,1},a_{1,2},a_{*},a_{\#}\}$.   \\

 Whatever is the case, observe that $\overline{\Omega}$ is the set of ``non-used elements'' and define $\overline{\lambda}\in \Sigma_{\overline{\Omega}}$ to be obtained from
$\lambda \beta_{0}$ by removing the ``used elements'' from its cyclic decomposition (thus $\overline{\lambda}\in\overline{D}_{1}$), and   $\overline{\beta}:=EKS(\overline{d},\overline{\lambda},\overline{D}_{2})$.
 Then $\bar{\lambda} \bar{\beta}$ is a $\overline{d}$-cycle. Put $\beta:=\beta_{0}\overline{\beta}$, thus $\lambda \beta$  is the $(d-2)$-cycle constructed from $\bar{\lambda} \bar{\beta}$ by inserting the maximal
 subsequence $\overline{a_{i,j}}\ldots\overline{a_{k,\ell}}$ of ``used elements'' (in any non-trivial cycle of $\lambda \beta_{0}$), next to $a_{m,n}:=(\lambda \beta_{0})^{-1}(\overline{a_{i,j}})$ on the right.

It remains to check that $\beta:=\beta_0\bar{\beta}\in D_2=[d_1,d_2,d_3,\dots,d_s,1,\dots,1]$.  
In the cases 2 and 3, this follows from the fact that the permutations $\beta_0\in[d_1,d_2,1,\dots,1]$ and $\bar{\beta}\in\overline{D}_2=[d_3,\dots,d_s,1,\dots,1]$ have disjoint supports
(consisting of ``used'' and ``non-used elements'' respectively).

 Finally, by considerations in the begining of the proof, the theorem is proved.

  \end{proof}

\paragraph{Proof of Theorem} \ref{maior}

 \begin{proof}
Since $\nu(\mathscr{D})>d-1$, we have $d\ge2$. Since $d$ is odd, we have $d\ge3$.
Let $\mathscr{D}=\{D_1,D_2\}$.
By Lemma \ref {fundlem2}, there exist permutations $\sigma_i\in D_i$ for $i=1,2$ such that $\sigma_1\sigma_2$ is a $(d-2)$--cycle and the permutation group $G:=\langle\sigma_1,\sigma_2\rangle\leq\Sigma_d$ is transitive and primitive.

Since $d-2$ is odd and $\sigma_1\sigma_2$ is a $(d-2)$--cycle, we can define $\alpha:=\sqrt{\sigma_1\sigma_2}$, so $G=\langle\alpha,\sigma_1,\sigma_2\rangle$ and $\sigma_1\sigma_2=\alpha^{2}$.
Now we apply the Hurwitz approach to this triple of permutations.
By Theorem \ref {thm:1.2} and Remark \ref{mon} we can realize $G$ as the monodromy group of a branched covering $(M,\phi, \mathbb{R}P^2, B_{\phi},d)$ whose branch datum is $\mathscr{D}$. Moreover, since
the monodromy group $G$ is transitive and primitive, by Proposition \ref {yo} the covering surface $M$ is connected and the branched covering $\phi$ is indecomposable.
 \end{proof}

\section{Arbitrary number of branch points}

This section is devoted to the proof of the main theorem. We will do that by induction on the number of partitions of  the branch datum. For that, it is necessary to know how we can reduce the size of the branch datum without loose relevant information. This procedure is inspired by the proof of Theorem 5.1\cite{EKS}.

From lemma 4.2\cite{EKS} and lemma 4.3 \cite{EKS} we can read the following two assertions, which we will state not as strong as they were given in \cite{EKS}.

\begin{Lemma}\label{l42}[Lemma 4.2 \cite{EKS}]
Let $A, B$ be partitions of $d$ with $\nu(A) + \nu(B) = d-t, t \ge 1$. Then there exist permutations
$\alpha \in A$ and $\beta \in B$ such that
$\nu(\alpha \beta)=d-t$.
\qed
\end{Lemma}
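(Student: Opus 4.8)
The plan is to restate the claim purely in terms of cycle counts and then reduce to the single--cycle lemma (Lemma~\ref{algEKS}) by an induction on $t$. Recall that for $\gamma\in\Sigma_d$ the number $\nu(\gamma)$ equals $d$ minus the number of cycles of $\gamma$ (counting fixed points). Hence, if $A$ has $p$ parts and $B$ has $q$ parts, then $\nu(A)=d-p$ and $\nu(B)=d-q$, and the hypothesis $\nu(A)+\nu(B)=d-t$ becomes $p+q=d+t$. Moreover, asking that $\nu(\alpha\beta)=d-t$ is exactly asking that $\alpha\beta$ decompose into precisely $t$ cycles. So the goal is: choose $\alpha\in A$ and $\beta\in B$ whose product has exactly $t$ cycles.

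The base case $t=1$ is immediate from Lemma~\ref{algEKS}: when $\nu(A)+\nu(B)=d-1$ we have $\nu(A)+\nu(B)\ge d-1$ and $d-1\equiv d+1\pmod2$, so the lemma supplies $\alpha\in A$, $\beta\in B$ with $\alpha\beta$ a single $d$--cycle, i.e.\ exactly $t=1$ cycle. For the inductive step ($t\ge2$) I would \emph{peel off one orbit}: find sub-multisets of parts $A^{(1)}\subseteq A$ and $B^{(1)}\subseteq B$ of equal sum $d_1$ with $\nu(A^{(1)})+\nu(B^{(1)})=d_1-1$, realize them on a block of $d_1$ symbols via Lemma~\ref{algEKS} as permutations whose product is a single $d_1$--cycle, and apply the induction hypothesis to the complementary partitions $A'=A\setminus A^{(1)}$ and $B'=B\setminus B^{(1)}$ of $d'=d-d_1$. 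The bookkeeping is automatic: $\nu(A')+\nu(B')=(d-t)-(d_1-1)=d'-(t-1)$, so the induction applies with parameter $t-1\ge1$ and produces $\alpha',\beta'$ with $\alpha'\beta'$ having $t-1$ cycles. Since the block and its complement use disjoint symbols, $\alpha:=\alpha^{(1)}\alpha'\in A$ and $\beta:=\beta^{(1)}\beta'\in B$ satisfy that $\alpha\beta=(\alpha^{(1)}\beta^{(1)})(\alpha'\beta')$ has $1+(t-1)=t$ cycles, as required.

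The one nontrivial point -- and the step I expect to be the main obstacle -- is to show that a peelable block $A^{(1)},B^{(1)}$ always exists when $t\ge2$. Here I would argue by cases on the presence of unit parts. If both $A$ and $B$ contain a part equal to $1$, I peel a common fixed point: $A^{(1)}=B^{(1)}=[1]$, so $d_1=1$ and $\nu(A^{(1)})+\nu(B^{(1)})=0=d_1-1$, and removing a $1$ changes neither total defect. Otherwise, say $A$ has no part equal to $1$ (so all $p$ parts are $\ge2$ and $d\ge2p$); then $B$ must contain many $1$'s, and I peel the smallest part $c_t$ of $A$ against $c_t$ of these $1$'s, giving $A^{(1)}=[c_t]$, $B^{(1)}=[1,\dots,1]$, $d_1=c_t$ and again $\nu(A^{(1)})+\nu(B^{(1)})=c_t-1=d_1-1$. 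The content of this case is the inequality $\ell_B\ge c_t$, where $\ell_B$ is the number of $1$'s in $B$: from $\nu(A)=d-p$ one gets $\nu(B)=p-t$, whence the number of parts of $B$ exceeding $1$ is at most $p-t$ and $\ell_B\ge q-(p-t)=d-2p+2t$; combining $d\ge2p$ with $c_t\le d/p$ then yields $d-2p+2t> d/p\ge c_t$. This guarantees enough $1$'s to form the block, while $\nu(B)=p-t\ge0$ gives $p\ge t\ge2$, so the complementary partitions are nonempty and of positive size. The symmetric case where $B$ has no $1$'s is handled identically with the roles of $A$ and $B$ exchanged; the essential mechanism is that the abundance of parts forced by $p+q=d+t$ always supplies a genus-zero block to remove.

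As a cross-check and an alternative that avoids the induction altogether, one can give a direct construction paralleling Lemma~\ref{algEKS}: build $\beta$ cycle by cycle so that every transposition in a minimal factorization of $\beta$ \emph{merges} two distinct cycles of the running product, so that $\nu(\alpha\beta)=\nu(\alpha)+\nu(\beta)=d-t$. In graph terms one arranges the auxiliary graph on the cycles of $\alpha$ (with one edge per transposition of $\beta$) to be a \emph{forest with exactly $t$ components} rather than a spanning tree; the forest has $p$ vertices and $d-q$ edges, which is consistent precisely because $t=p+q-d\ge1$. This is the viewpoint closest to the arguments of \cite{EKS}, and it makes transparent why $t\ge1$ is exactly the condition needed.
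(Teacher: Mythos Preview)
The paper does not give its own proof of this lemma: it is quoted verbatim from \cite{EKS} (Lemma~4.2 there) and closed with a \qed. So there is nothing in the paper to compare your argument against.

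Your proof is correct. The reformulation ``$\alpha\beta$ has exactly $t$ cycles'' is the right way to see it, and the induction on $t$ with the block--peeling step goes through. Two small points worth recording explicitly, since they are the only places a reader might pause:
\begin{itemize}
\item In the case where $A$ has no unit part, your inequality $\ell_B\ge c_t$ is indeed valid: from $d\ge 2p$ you get $d(1-1/p)\ge 2(p-1)$, and since $t\ge2$ this is strictly larger than $2(p-t)$; rearranging gives $d-2p+2t>d/p\ge c_t$, and as both ends are integers this yields $\ell_B\ge d-2p+2t\ge c_t+1$. In particular the case ``neither $A$ nor $B$ has a unit part'' cannot occur when $t\ge2$, so your three cases really are exhaustive.
\item After peeling, the complementary partitions are genuine partitions of $d'=d-d_1>0$: in the $[c_t]$--versus--$[1,\dots,1]$ case this uses $p\ge t\ge2$ (so $A'$ has $p-1\ge1$ parts) and $\ell_B\ge c_t$ (so $B'$ is well defined).
\end{itemize}
Your closing ``forest with $t$ components'' description is exactly the picture behind the original \cite{EKS} argument, so both routes lead to the same place.
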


\begin{Lemma}\label{l43}[Lemma 4.3 \cite{EKS}]
Let $A,B$ be partitions of $d$ with $\nu(A)+ \nu(B) = (d - 1) + r, r > 0$. Then for
each $k$ satisfying $0 \leq k \leq r, k \equiv r \pmod 2$, one may choose $\alpha \in A, \beta \in B$
so that $\nu(\alpha\beta) = (d-1)-k$.
\qed
\end{Lemma}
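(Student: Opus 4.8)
The plan is to fix the correct parity, anchor the construction at the largest admissible value of $\nu(\alpha\beta)$ furnished by Lemma \ref{algEKS}, and then descend through every intermediate value by a single elementary move that raises the number of cycles of the product by exactly two at each step. Note first that the parity is forced and consistent: for any $\alpha\in A$, $\beta\in B$ one has $\nu(\alpha\beta)\equiv\nu(\alpha)+\nu(\beta)=\nu(A)+\nu(B)=(d-1)+r\pmod 2$, while each target $(d-1)-k$ with $k\equiv r\pmod 2$ satisfies $(d-1)-k\equiv(d-1)+r\pmod 2$ as well. Writing $\nu(\alpha\beta)=d-c$, where $c$ is the number of cycles of $\alpha\beta$, the target value $(d-1)-k$ corresponds to $c=1+k$; so I must produce a product with exactly $1+k$ cycles for every $k\in\{k_0,k_0+2,\dots,r\}$, where $k_0=0$ if $r$ is even and $k_0=1$ if $r$ is odd. (All these lie in range since $r\le d-1$.)

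For the anchor, suppose first $r$ is even. Then $\nu(A)+\nu(B)=(d-1)+r\equiv d+1\pmod 2$, so Lemma \ref{algEKS} applies and yields $\alpha\in A$, $\beta\in B$ with $\alpha\beta$ a $d$-cycle, i.e.\ $c=1$ and $k=0$. If $r$ is odd this extreme is impossible for parity reasons, and I would instead build an anchor with $c=2$ (the value $k=1$): reserving one point fixed by both factors reduces the problem to producing a $(d-1)$-cycle on the remaining points, which is again settled by Lemma \ref{algEKS}. Arranging such a common reserved point from the prescribed classes $A,B$ is where a short separate argument is needed.

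For the descending move, start from any realization $\pi=\alpha\beta$ with $\alpha\in A$, $\beta\in B$ and conjugate \emph{only} $\beta$ by a transposition $\tau=(x\;y)$, which keeps $\beta':=\tau\beta\tau\in B$ and leaves $\alpha\in A$ untouched. The key identity is
\[
\alpha\beta'=\alpha\tau\beta\tau=(\alpha\tau\alpha^{-1})\,\pi\,\tau,
\]
which exhibits the new product as $\pi$ multiplied on both sides by the transpositions $\tau_1:=\alpha\tau\alpha^{-1}$ and $\tau$. Since multiplying a permutation by a transposition changes its number of cycles by exactly $\pm1$, the number of cycles of the product changes by $0$ or $\pm2$; and one can choose $x,y$ in a common cycle of $\pi$, with the two points of $\tau_1$ lying in a common cycle of $\pi\tau$, so that both factors split and $c$ increases by exactly $2$, i.e.\ $\nu(\alpha\beta)$ decreases by $2$. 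Iterating from the anchor, each application raises $k$ by $2$, sweeping through $k_0,k_0+2,\dots$ up to $k=r$, i.e.\ down to $\nu(\alpha\beta)=(d-1)-r$, as required.

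The delicate points are twofold. The first is the odd-$r$ anchor described above. The second, which I expect to be the main obstacle, is verifying that a genuine $+2$ split is always available at each intermediate step: one must check that the current product $\pi$ always carries a cycle structure admitting a transposition pair $(\tau_1,\tau)$ realizing the double split, all the way down until the minimal value $(d-1)-r$ is reached. This needs a short case analysis on the cycle type of $\pi$ near the boundary, together with a direct treatment of the degenerate small cases. By contrast, the algebraic engine, the identity above turning a single conjugation of $\beta$ into a controlled two-transposition perturbation of the product while preserving both conjugacy classes, is the conceptual heart and requires no transitivity hypothesis, which is exactly why it fits the present statement.
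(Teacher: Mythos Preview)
The paper does not supply its own proof of this lemma: it is quoted from \cite{EKS} and closed with a \qed. So there is no argument in the paper to compare your sketch against, and I can only assess the sketch on its own merits.

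Your outline has the right shape (anchor at the maximal value, then step $k\mapsto k+2$ by conjugating one factor), but both places you flag as ``delicate'' are genuine gaps rather than routine bookkeeping. First, the odd-$r$ anchor via ``reserving a common fixed point'' can fail outright: take $d=4$, $A=B=[2,2]$, so $\nu(A)+\nu(B)=4=(d-1)+1$ and $r=1$; neither partition contains a $1$, so there is no common point to reserve, yet the target $k=1$ (i.e.\ $\nu(\alpha\beta)=2$) must still be produced. A different construction is needed here, not merely a ``short separate argument'' to locate a fixed point. Second, the descending move is the heart of the matter and you have not shown it is always available. You need, at every intermediate stage, a transposition $\tau=(x\;y)$ with $x,y$ in a common cycle of $\pi=\alpha\beta$ \emph{and} with the support of $\alpha\tau\alpha^{-1}$ landing in a single cycle of $\pi\tau$; since the second transposition is determined by the first (and by $\alpha$), this is a genuine constraint, and nothing in the sketch guarantees a compatible choice exists all the way down to $k=r$. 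Finally, note a circularity hazard: your even-$r$ anchor invokes Lemma~\ref{algEKS}, which in \cite{EKS} is Corollary~4.4 and is presumably derived from the very Lemma~4.3 you are proving; within the present paper both are imported as black boxes, so this is harmless here, but it would not yield an independent proof of the EKS result.
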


\begin{Prop} \label {pro:reduc}
Let $\mathscr{D}=\{D_1,\dots,D_s\}$ be a collection of partitions of an odd integer $d\ge3$ such that $s\geq 3$ and
$d-1<\nu(\mathscr{D})\equiv0\pmod2$.
Then there exist a partition $D$ and permutations $\gamma_1,\gamma_2\in\Sigma_d$
such that the new collection of partitions $\widehat{\mathscr{D}}=\{D,D_3, \dots, D_s\}$ satisfies the conditions
$d-1 <\nu(\widehat{\mathscr{D}})\equiv 0 \pmod{2}$ and $\gamma_1\in D_1,\gamma_2\in D_2$, $\gamma_1\gamma_2\in D$.
\end{Prop}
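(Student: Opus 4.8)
The plan is to obtain $\widehat{\mathscr{D}}$ by \emph{merging two of the partitions}: I replace $D_1,D_2$ by the cyclic structure $D$ of a product $\gamma_1\gamma_2$ with $\gamma_i\in D_i$, keeping $D_3,\dots,D_s$ untouched. Since the collection is unordered, I would first relabel so that $D_1,D_2$ are the two partitions of \emph{smallest defect}, i.e.\ so that $P:=\nu(D_1)+\nu(D_2)$ is the minimum of $\nu(D_i)+\nu(D_j)$ over $i\ne j$ (and so that $\nu(D_1)\le\nu(D_2)$). This choice is essential: merging an arbitrary pair can fail to keep the defect above $d-1$. Write $R:=\sum_{i=3}^{s}\nu(D_i)=\nu(\mathscr{D})-P$.

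The parity requirement is automatic and I would dispose of it at once. Because the sign is a homomorphism with $\mathrm{sgn}(\gamma)=(-1)^{\nu(\gamma)}$, every product satisfies $\nu(\gamma_1\gamma_2)\equiv\nu(D_1)+\nu(D_2)\pmod2$, hence $\nu(\widehat{\mathscr{D}})=\nu(\gamma_1\gamma_2)+R\equiv\nu(\mathscr{D})\equiv0\pmod2$ regardless of the choice of $\gamma_1,\gamma_2$. Thus the entire problem reduces to producing $\gamma_1\in D_1,\gamma_2\in D_2$ with $\nu(\gamma_1\gamma_2)+R>d-1$, that is (being even) with $\nu(\gamma_1\gamma_2)+R\ge d+1$, and then declaring $D$ to be the cyclic structure of $\gamma_1\gamma_2$ so that $\gamma_1\gamma_2\in D$ holds by definition.

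Next I split on the size of $P$. If $P\le d-1$, then Lemma \ref{l42} (with $t=d-P\ge1$) yields $\gamma_1,\gamma_2$ with $\nu(\gamma_1\gamma_2)=P$, so $\nu(\widehat{\mathscr{D}})=P+R=\nu(\mathscr{D})>d-1$ and I am done. If instead $P\ge d$, I exploit the minimality of $P$: from $\nu(D_1)\le\nu(D_2)$ and $2\nu(D_2)\ge P\ge d$ I get $\nu(D_2)\ge(d+1)/2$, and since $\nu(D_i)\ge\nu(D_2)$ for all $i\ge3$ this gives $R\ge\nu(D_2)\ge(d+1)/2$. Writing $P=(d-1)+r$ with $r\ge1$, Lemma \ref{l43} lets me realize $\nu(\gamma_1\gamma_2)=d-1$ when $r$ is even (take $k=0$) and $\nu(\gamma_1\gamma_2)=d-2$ when $r$ is odd (take $k=1$). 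In the even subcase $\nu(\widehat{\mathscr{D}})\ge(d-1)+(d+1)/2\ge d+1$; in the odd subcase $\nu(\widehat{\mathscr{D}})\ge(d-2)+(d+1)/2\ge d+1$ provided $(d+1)/2\ge3$, i.e.\ $d\ge5$.

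The main obstacle is precisely this odd subcase for small $d$: when $d=3$ one only secures $R\ge2$, which is short of what the bound $(d-2)+R\ge d+1$ needs. I would close the gap by showing the offending configuration is vacuous. For $d=3$, the assumptions $P\ge d$ odd together with $\nu(D_2)\ge(d+1)/2=2$ and $\nu(D_2)\le d-1=2$ force $\nu(D_2)=2$ (so $D_2=[3]$) and then $\nu(D_1)=1$ (so $D_1=[2,1]$), while $\nu(D_i)\ge\nu(D_2)=2$ compels $\nu(D_i)=2$ for every $i\ge3$. Consequently $\nu(\mathscr{D})=1+2+2(s-2)$ is odd, contradicting $\nu(\mathscr{D})\equiv0\pmod2$. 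Hence this branch never arises, and in all surviving cases $\nu(\widehat{\mathscr{D}})>d-1$, which completes the reduction.
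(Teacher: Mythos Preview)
Your proof is correct and follows the same skeleton as the paper's: split on whether $P=\nu(D_1)+\nu(D_2)\le d-1$ or $P\ge d$, apply Lemma~\ref{l42} in the first case and Lemma~\ref{l43} with $k\in\{0,1\}$ in the second, and observe that parity is automatic from the sign homomorphism.

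The only genuine difference is the relabeling strategy. The paper relabels more economically: it simply arranges that $\sum_{i\ge3}\nu(D_i)\ge2$ (noting that if this sum equals $1$ then $s=3$, $D_3=[2,1,\dots,1]$, and one of $\nu(D_1),\nu(D_2)$ exceeds $1$ by parity, so swap it into the $D_3$ slot). This single inequality $R\ge2$ is exactly what is needed to conclude $(d-2)+R>d-1$ in the $k=1$ subcase, uniformly in $d$. Your choice --- taking $D_1,D_2$ to have the two smallest defects --- buys the stronger bound $R\ge(d+1)/2$ when $P\ge d$, but at the cost of a separate vacuousness check at $d=3$. Both routes are valid; the paper's is shorter, while yours extracts more quantitative information about $R$ that is not actually needed here.
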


\begin{proof}
Step 1. Without
 loss of generality we may and shall assume that $\sum_{i=3}^s\nu(D_i)>1$. Suppose the contrary. Then $\sum_{i=3}^s\nu(D_i)=1$, $s=3$ and $D_3=[2,1\dots,1]$.
 In this case either $\nu(D_1)>1$ or $\nu(D_2)>1$, since otherwise $D_1=D_2=D_3=[2,1\dots,1]$, $\nu(\mathscr{D})=3$ is odd,

which is impossible. So, we can relabel the partitions in $\mathscr{D}$ in such a way that $\nu(D_3)>1.$

Step 2. Suppose $\nu(\mathscr{D})=(d-1)+2q$ with $q > 0$. Then $\nu(D_1)+\nu(D_2)=(d-1)+2q-\sum_{i=3}^s\nu(D_i)$.

If $r:=2q -\sum_{i=3}^s\nu(D_i)\leq 0$, define $t:=1-r\geq 1$ then $\nu(D_1)+\nu(D_2)=d-t$ and applying Lemma \ref{l42},  there exist $\gamma_1 \in D_1,\; \gamma_2 \in D_2$ such that
$$ \nu(\gamma_1 \gamma_2)=d-t=(d-1)+2q-\sum_{i=3}^s\nu(D_i).$$ Let $D$ be the partition determined by the cycle structure of $\gamma_1 \gamma_2$.
Then $\nu(D)+\sum_{i=3}^s\nu(D_i)= (d-1)+2q $ and $$\nu(\widehat{\mathscr{D}})= \nu(\mathscr{D}),$$ where $\widehat{\mathscr{D}}=\{D,D_3,\dots, D_s\}.$
Moreover, since $d-1 < \nu(\mathscr{D})$ then $d-1 < \nu(\widehat{\mathscr{D}})$.

If $r>0$, let $k \in \{ 0,1\}$ such that $r\equiv k$ mod 2.
Applying Lemma \ref{l43}, there are permutations $\gamma_1 \in D_1,\; \gamma_2 \in D_2$ such that
$\nu(\gamma_1 \gamma_2)=(d-1)-k.$  Let $D$ be the partition determined by the cyclic structure of $\gamma_1 \gamma_2$.
Since  $\nu(D)\equiv\nu(D_1)+\nu(D_2) \pmod{2}$ then $$\nu(\widehat{\mathscr{D}})\equiv \nu(\mathscr{D}) \pmod{2},$$ where $\widehat{\mathscr{D}}=\{D,D_3,\dots, D_s\}.$
    If $k=0$ then $$ d-1<(d-1)+\sum_{i=3}^s\nu(D_i)=\nu(D)+\sum_{i=3}^s\nu(D_i)=\nu(\widehat{\mathscr{D}}),$$
    where inequality is justified because  $\sum_{i=3}^s\nu(D_i)>0$ since $s \geq 3$.  If $k=1$ then
    $
\nu(\widehat{\mathscr{D}})=(d-1)-1 + \sum_{i=3}^s\nu(D_i).
    $
By step 1, $\sum_{i=3}^s\nu(D_i)>1$.
Hence $d-1 < \nu(\widehat{\mathscr{D}}). $
\end{proof}

\begin{FundLem} \label {fundlem}
Let $\mathscr{D}=\{D_1,\dots,D_s\}$ be a collection of partitions of an odd integer $d\ge3$ such that $d-1<\nu(\mathscr{D})\equiv0\pmod2$.
Then there exist permutations $\sigma_i\in D_i$ for $1\le i\le s$ such that $\sigma_1\dots\sigma_s$ is a $(d-2)$--cycle and the permutation group
$G:=\langle\sigma_1,\dots,\sigma_s\rangle\leq\Sigma_d$ is transitive. For any such a collection of permutations, the permutation group $G$ is primitive.
\end{FundLem}

\begin{proof}
Step 1. Let us prove the first assertion of Lemma by induction on the number $s$ of partitions of $d$ in $\mathscr{D}$.
First we observe that $s\ge2$, since $\nu(\mathscr{D})>d-1$.

For two partitions ($s=2$), the first assertion of Lemma is proved
in Theorem \ref{maior} (see the beginning of its proof). For the induction hypothesis, suppose that $k\ge2$ and the assertion of Lemma holds
for $s=k$, i.e.\ for every collection $\mathscr{D}=\{D_1,\dots,D_k\}$ of $k$ partitions of $d$ such that $d-1<\nu(\mathscr{D})\equiv0\pmod2$
there exist permutations $\sigma_i\in D_i$ for $1\le i\le k$ such that $\sigma_1\dots\sigma_k$ is a $(d-2)$--cycle and the permutation group $\langle\sigma_1,\dots,\sigma_k\rangle\leq\Sigma_d$ is transitive.
For the inductive step, consider a collection $\mathscr{D}=\{D_1,D_2,D_3,\dots,D_{k+1}\}$ of $k+1$ partitions of $d$ such that $d-1<\nu(\mathscr{D})\equiv0\pmod2$.
By Proposition \ref {pro:reduc},
there exists a collection $\widehat{\mathscr{D}}=\{D,D_3, \dots, D_{k+1}\}$ of $k$ partitions
such that $d-1 <\nu(\widehat{\mathscr{D}})\equiv 0 \pmod{2}$, moreover there exist permutations $\gamma_1\in D_1,\gamma_2\in D_2$ such that $\gamma_1\gamma_2\in D$.

Thus, by the induction hypothesis,
there exist permutations  $\sigma \in D,\; \sigma_i \in D_i$  for $i=3,\dots,k+1$
in $\Sigma_d$, such that the permutation group $\widehat{G}=\langle
\sigma,\; \sigma_3,\dots,\sigma_{k+1}\rangle$ is transitive and
$\sigma \sigma_3\dots\sigma_{k+1}\in[d-2,1,1]$.
Since $\sigma\in D$ and $\gamma_1\gamma_2\in D$, it is clear that $\sigma$ and $\gamma_1 \gamma_2$ are conjugate. Thus there exists 
$\lambda \in \Sigma_d$ such that $\sigma=\lambda \gamma_1 \gamma_2 \lambda^{-1}$. Define the permutation  group $G=\langle
\lambda \gamma_1 \lambda^{-1},\; \lambda \gamma_2 \lambda^{-1} , \sigma_3,\dots,\sigma_{k+1}\rangle$ where the relation
$\lambda \gamma_1 \gamma_2 \lambda^{-1} \sigma_3\dots\sigma_{k+1}=\sigma \sigma_3\dots\sigma_{k+1}\in[d-2,1,1]$
certainly holds.
Since $\widehat{G} \le G$ and $\widehat{G}$ is transitive, the group $G$ is transitive too. The induction step is completed.

Step 2. Let us prove the second assertion of Lemma.
Since $\sigma_1\dots\sigma_s$ is a $(d-2)$--cycle and the group $G=\langle\sigma_1,\dots,\sigma_s\rangle$ is transitive, it follows from Example \ref {referee} that $G$ is primitive.

\end{proof}

\begin{Rm}
Lemma \ref {fundlem} is equivalent to the following assertion which is more convenient for studying branched coverings of the 2-sphere.
Let $\mathscr{D}=\{D_1,\dots,D_s\}$ be a collection of partitions of an odd integer $d\ge2$ such that
 $2d-2\le\nu(\mathscr{D})\equiv0\pmod2$ and $D_1=[d-2,1,1]$.
Then there exist permutations $\sigma_i\in D_i$ for $1\le i\le s$ such that $\sigma_1\dots\sigma_s$ is the trivial permutation.
\end{Rm}

\begin{theo}\label{maiorgeral}
Let  $\mathscr{D}$ be a collection of partitions of an odd integer $d$
such that $d-1<\nu(\mathscr{D})\equiv0\pmod2$ (compare (\ref {hcpp})).
Then it can be realized as the branch datum of an indecomposable (and hence primitive) branched covering of degree $d$ over the projective plane with a connected covering surface.
\end{theo}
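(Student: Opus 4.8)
The plan is to obtain this theorem as an immediate consequence of the Fundamental Lemma \ref{fundlem}, combined with the Hurwitz realization machinery, following essentially verbatim the argument already used in the proof of Theorem \ref{maior} for the case $s=2$. First I would dispose of the trivial reductions: since $\nu(\mathscr{D})>d-1\ge0$ and $d$ is odd we have $d\ge3$, and since a single partition of $d$ has total defect at most $d-1$ while $\nu(\mathscr{D})>d-1$, the collection $\mathscr{D}=\{D_1,\dots,D_s\}$ must contain at least two members, so $s\ge2$. Thus the hypotheses of the Fundamental Lemma \ref{fundlem} are met.

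Next I would apply the Fundamental Lemma \ref{fundlem} to produce permutations $\sigma_i\in D_i$, $1\le i\le s$, such that $c:=\sigma_1\cdots\sigma_s$ is a $(d-2)$-cycle and the group $G:=\langle\sigma_1,\dots,\sigma_s\rangle$ is transitive and primitive. Since $d$ is odd, $d-2$ is odd as well, so by Example \ref{square} the $(d-2)$-cycle $c$ admits a square root $\alpha$ with $\alpha^2=c$; in fact one may take $\alpha:=c^{(d-1)/2}\in\langle c\rangle\subseteq G$, since $(\,c^{(d-1)/2}\,)^2=c^{d-1}=c$ (because $c^{d-2}=1_d$) and $\gcd((d-1)/2,d-2)=1$, so that $\alpha$ already lies in $G$. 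Using the presentation $\pi_1(\mathbb{R}P^{2}\setminus\{x_1,\dots,x_s\})=\langle a,\mathbf{u}_1,\dots,\mathbf{u}_s\mid\prod_{i=1}^{s}\mathbf{u}_i=a^{-2}\rangle$ recalled in Remark \ref{mon}, I would define the Hurwitz representation $\rho$ by $\rho(\mathbf{u}_i):=\sigma_i$ and $\rho(a):=\alpha^{-1}$. The defining relation is respected because $\prod_{i=1}^{s}\sigma_i=c=\alpha^{2}=(\alpha^{-1})^{-2}$, and each $\rho(\mathbf{u}_i)=\sigma_i$ lies in $D_i$ as required. The image $\mathrm{Im}\,\rho=\langle\sigma_1,\dots,\sigma_s,\alpha^{-1}\rangle$ contains the transitive primitive group $G$, hence is itself transitive and primitive (every block system for a transitive group containing $G$ is in particular a block system for $G$, thus trivial since $G$ is primitive).

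Finally I would invoke Theorem \ref{thm:1.2} to realize $\mathscr{D}$ as the branch datum of a branched covering $\phi\colon M\to\mathbb{R}P^{2}$ of degree $d$ whose Hurwitz representation is $\rho$; since $\mathrm{Im}\,\rho$ is transitive, Proposition \ref{yo} guarantees that $M$ is connected, and since $\mathrm{Im}\,\rho$ is primitive the same proposition guarantees that $\phi$ is indecomposable, completing the proof. The main point to emphasize is that there is no genuinely new obstacle at this final stage: all of the combinatorial difficulty---producing, for an arbitrary number of branch points, permutations in the prescribed conjugacy classes whose product is a single long cycle generating a transitive (hence, via Example \ref{referee}, primitive) group---has already been absorbed into the Fundamental Lemma \ref{fundlem}, whose inductive proof reduces the general case to the two-point Lemma \ref{fundlem2}. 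The only residual things to verify here are the bookkeeping reductions $d\ge3$ and $s\ge2$, and the consistency of the square relation $a^{-2}=\prod_{i=1}^{s}\mathbf{u}_i$, which is supplied precisely by the oddness of $d-2$ together with Example \ref{square}.
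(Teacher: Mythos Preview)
Your proof is correct and follows essentially the same route as the paper's own argument: apply the Fundamental Lemma \ref{fundlem} to obtain $\sigma_i\in D_i$ with $\sigma_1\cdots\sigma_s$ a $(d-2)$-cycle and $G$ transitive and primitive, take a square root of this odd-length cycle to satisfy the relation in $\pi_1(\mathbb{R}P^2\setminus B)$, and then invoke Theorem \ref{thm:1.2} and Proposition \ref{yo}. Your additional observations (that $s\ge2$ is forced, that one may take $\alpha=c^{(d-1)/2}\in G$ explicitly, and that a transitive overgroup of a primitive group is primitive) are correct refinements of details the paper leaves implicit.
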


\begin{proof}
Since $\nu(\mathscr{D})>d-1$, we have $d\ge2$. Since $d$ is odd, we have $d\ge3$.
Let $\mathscr{D}=\{D_1,\dots,D_s\}$.
By Fundamental Lemma \ref {fundlem}, there exist permutations $\sigma_i\in D_i$ for $1\le i\le s$ such that $\sigma_1\dots\sigma_s$ is a $(d-2)$--cycle and the permutation group
$G:=\langle\sigma_1,\dots,\sigma_s\rangle\leq\Sigma_d$ is transitive and primitive.

Since $d-2$ is odd and $\sigma_1\dots\sigma_s$ is a $(d-2)$--cycle, we can define $\alpha:=\sqrt{\sigma_1\dots\sigma_s}$, so $G=\langle\alpha,\sigma_1,\dots,\sigma_s\rangle$ and $\sigma_1\dots\sigma_s=\alpha^{2}$.
Now we apply the Hurwitz approach to this collection of permutations.
By Theorem \ref {thm:1.2} and Remark \ref{mon} we can realize $G$ as the monodromy group of a branched covering $(M,\phi, \mathbb{R}P^2, B_{\phi},d)$ whose branch datum is $\mathscr{D}$. Moreover, since
the monodromy group $G$ is transitive and primitive, by Proposition \ref {yo} the covering surface $M$ is connected and the branched covering $\phi$ is indecomposable.
\end{proof}

We observe that Fundamental Lemma \ref {fundlem} is in fact equivalent to the existence, for any collection $\mathscr{D}=\{D_1,\dots,D_s\}$ of partitions of an odd integer $d\ge3$ with $D_1=[d-2,1,1]$, of a branched covering
$(M,\phi, S^2, B_{\phi},d)$ over $S^2$ whose branch datum is $\mathscr{D}$ and the covering surface $M$ is connected, provided that the Hurwitz conditions $2d-2\ge\nu(\mathscr{D})\equiv0\mod2$ hold.
A similar result was obtained by R.~Thom \cite {Thom} about the existence, for any collection $\mathscr{D}=\{D_1,\dots,D_s\}$ of partitions of an integer $d\ge3$ with $D_1=[d]$, of a branched covering
$(S^2,\phi, S^2, B_{\phi},d)$ over $S^2$ whose branch datum is $\mathscr{D}$ and the covering surface $M=S^2$, provided that the Hurwitz condition $\nu(\mathscr{D})=2d-2$ holds.
More precisely, we obtain from Fundamental Lemma \ref {fundlem} the following result that contributes to giving a partial solution of the realization problem for certain branch data \cite{EKS}.

\begin{theo}\label{maior:sphere}
Let $\mathscr{D}=\{D_1,\dots,D_s\}$ be a collection of partitions of an odd integer $d\ge3$ such that $2d-2\le\nu(\mathscr{D})\equiv0\pmod2$ and $D_1=[d-2,1,1]$. Then $\mathscr{D}$ 
can be realized as the branch datum of a branched covering of degree $d$ over the 2-sphere with connected covering surface. Any such a branched covering is indecomposable (and hence primitive).
\end{theo}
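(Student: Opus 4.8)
The plan is to deduce this from the Fundamental Lemma \ref{fundlem} by reducing the given datum to the subcollection $\{D_2,\dots,D_s\}$ and letting the distinguished partition $D_1=[d-2,1,1]$ absorb the resulting $(d-2)$-cycle. First I would record that, since every partition of $d$ has defect at most $d-1$ while $\nu(D_1)=d-3$, the hypothesis $\nu(\mathscr{D})\ge 2d-2$ forces $s\ge3$. Indeed $\nu(\{D_2,\dots,D_s\})=\nu(\mathscr{D})-(d-3)\ge d+1>d-1$, and this quantity is even because $d$ is odd. Thus $\{D_2,\dots,D_s\}$ is a collection of partitions of the odd integer $d\ge3$ satisfying the hypotheses of Fundamental Lemma \ref{fundlem}, which yields permutations $\sigma_i\in D_i$ for $2\le i\le s$ whose product $\sigma_2\cdots\sigma_s$ has cyclic structure $[d-2,1,1]$ and whose group $\langle\sigma_2,\dots,\sigma_s\rangle$ is transitive and primitive.

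Next I would set $\sigma_1:=(\sigma_2\cdots\sigma_s)^{-1}$. This permutation is again of type $[d-2,1,1]$, so $\sigma_1\in D_1$, and by construction $\sigma_1\sigma_2\cdots\sigma_s=1_{d}$. Since $\sigma_1\in\langle\sigma_2,\dots,\sigma_s\rangle$, the full group $G=\langle\sigma_1,\dots,\sigma_s\rangle$ coincides with $\langle\sigma_2,\dots,\sigma_s\rangle$ and is therefore transitive and primitive. Using the presentation $\pi_1(S^2\setminus F)=\langle\mathbf{u}_1,\dots,\mathbf{u}_s\mid\prod_{i=1}^s\mathbf{u}_i=1\rangle$, I would define $\rho(\mathbf{u}_i):=\sigma_i$; the relation $\prod_{i=1}^s\sigma_i=1_{d}$ makes $\rho$ a well-defined representation with transitive image $G$. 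Theorem \ref{thm:1.2} then realizes $\mathscr{D}$ as the branch datum of a branched covering $\phi:M\to S^2$ with $M$ connected.

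Finally, for the indecomposability of \emph{any} realization I would argue directly on the monodromy. Let $\phi:M\to S^2$ be any branched covering with $M$ connected and branch datum $\mathscr{D}$. By Proposition \ref{yo} its monodromy group $G(\phi)$ is transitive, and since $D_1=[d-2,1,1]$ occurs in $\mathscr{D}$, the group $G(\phi)$ contains a $(d-2)$-cycle. When $d$ is prime, transitivity already forces primitivity. When $d$ is odd and non-prime (so $d\ge9$), one has $\gcd(d-2,d)=1$ because $d$ is odd, and $d-2$ exceeds every non-trivial divisor of $d$, since such a divisor is at most $d/3<d-2$ for $d>3$; hence Example \ref{referee} applies and $G(\phi)$ is primitive. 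In either case Proposition \ref{yo} shows that $\phi$ is indecomposable.

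The main obstacle is the mismatch between the two target surfaces: Fundamental Lemma \ref{fundlem} is calibrated for $\mathbb{R}P^{2}$, where the product of the monodromy generators is a $(d-2)$-cycle (subsequently replaced by a square root), whereas realizability over $S^2$ demands that this product be trivial. The device overcoming this is precisely the fixed partition $D_1=[d-2,1,1]$: applying the Lemma only to $\{D_2,\dots,D_s\}$ and inverting their product produces a $\sigma_1\in D_1$ that closes the product up to $1_{d}$, while transitivity—and hence primitivity via Example \ref{referee}—is inherited from the subgroup $\langle\sigma_2,\dots,\sigma_s\rangle$.
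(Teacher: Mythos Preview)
Your proposal is correct and follows essentially the same approach as the paper: strip off $D_1$, apply Fundamental Lemma \ref{fundlem} to $\{D_2,\dots,D_s\}$, and take $\sigma_1$ to be the inverse of the resulting $(d-2)$-cycle. One point where you are in fact more thorough than the paper: the theorem asserts that \emph{any} realization with connected covering surface is indecomposable, and you supply an argument for this (via Example \ref{referee} applied to the $(d-2)$-cycle forced by $D_1$), whereas the paper's proof only verifies it for the specific realization it constructs.
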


\begin{proof}
Consider the collection $\hat{\mathscr{D}}:=\mathscr{D}\setminus\{D_1\}=\{D_2,\dots,D_s\}$ of partitions of $d$. Since $\nu(\mathscr{D})\ge2d-2$, we have $\nu(\hat{\mathscr{D}})=\nu(\mathscr{D})-\nu(D_1)=\nu(\mathscr{D})-d+3\ge d+1$.
 
By applying Fundamental Lemma \ref {fundlem} to $\hat{\mathscr{D}}$, we have that there exist permutations $\sigma_i\in D_i$ for $2\le i\le s$ such that $\sigma_2\dots\sigma_s$ is a $(d-2)$--cycle
and the permutation group $G:=\langle\sigma_2,\dots,\sigma_s\rangle\leq\Sigma_d$ is transitive and primitive.

Take the permutation $\sigma_1:=(\sigma_2\dots\sigma_s)^{-1}$,

so $G=\langle\sigma_1,\dots,\sigma_s\rangle$ and $\sigma_1\dots\sigma_s$ is the trivial permutation.
Now we apply the Hurwitz approach to this collection of permutations.
By Theorem \ref {thm:1.2} and analogue of Remark \ref{mon} for $N=S^2$, we can realize $G$ as the monodromy group of a branched covering $(M,\phi, S^2, B_{\phi},d)$ whose branch datum is $\mathscr{D}$. Moreover, since
the monodromy group $G$ is transitive and primitive, by Proposition \ref {yo} the covering surface $M$ is connected and the branched covering $\phi$ is indecomposable.
\end{proof}

Clearly, Theorem \ref {maior:sphere} implies Fundamental Lemma \ref {fundlem}.

\section {Appendix}
The  table below is related with the second case in the proof of Lemma \ref{fundlem2}, for $d=5,\;7,\;9,\;11.$ We list all the possibilities for $\mathscr{D}=\{D_1,D_2\}$, with $c_1=d_1=3$ and
 $d-1<\nu(\mathscr{D})\equiv 0 \pmod {2}$, as well their realization by permutations $\lambda \in D_1$, $\beta \in D_2$ such that $\alpha \beta$ is a $(d-2)$-cycle.\\

\begin{scriptsize}
\hspace*{-1.5cm}
\begin{tabular}{|c|c|c|c|c|c|c|c|c|c|c|}
\hline
 &$t$& $d$&  $\mathscr{D}=\{D_1,D_2\}$, &   $\lambda \in D_1$ & $\beta \in D_2$ & $\lambda\beta\in [d-2,1,1]$\\
\hline
1&$2$& $5$ &$\{[3,2],[3,2]\}$  &$(1\;2\;3)(4\;5)$ &$(5\;4\;1)(3\;2)$&$(1\;3\;5)(2)(4)$  \\
\hline
2&$3$&$7$&$\{[3,3,1],[3,3,1]\}$&$(1\;2\;3)(4\;5\;6)(7)$&$(3\;2\;4)(6\;5\;7)(1)$&$(1\;4\;7\;6\;3)(2)(5)$\\
\hline
3&$3$&$7$&$\{[3,3,1],[3,2,2]\}$&$(1\;2\;3)(4\;5\;6)(7)$&$(3\;2\;4)(5\;6)(7\;1)$&$(1\;4\;6\;3\;7)(2)(5)$\\
\hline
4&$3$&$7$&$\{[3,2,2],[3,2,2]\}$&$(1\;2\;3)(4\;5)(6\;7)$&$(1\;2)(5\;4\;6)(7\;3)$&$(2\;7\;5\;6\;3)(1)(4)$\\
\hline
5&$3$&$9$&$\{[3,3,3],[3,3,1,1,1]\}$&$(1\;2\;3)(4\;5\;6)(7\;8\;9)$&$(3\;2\;4)(6\;5\;7)(1)(8)(9)$&$(1\;4\;7\;8\;9\;6\;3)(2)(5)$\\
\hline
6&$3$&$9$&$\{[3,3,3],[3,2,2,1,1]\}$&$(1\;2\;3)(4\;5\;6)(7\;8\;9)$&$(3\;2\;4)(6\;5)(7\;1)(8)(9)$&$(1\;4\;6\;3\;7\;8\;9)(2)(5)$\\
\hline
7&$3$&$9$&$\{[3,3,3],[3,3,3]\}$&$(1\;2\;3)(4\;5\;6)(7\;8\;9)$&$(3\;2\;4)(6\;5\;7)(1\;8\;9)$&$(1\;4\;7\;9\;6\;3\;8)(2)(5)$\\
\hline
8&$4$&$9$&$\{[3,3,2,1],[3,3,2,1]\}$&$(1\;2\;3)(4\;5\;6)(7\;8)(9)$&$(2\;1\;7)(6\;5\;9)(3\;4)(8)$&$(2\;4\;9\;6\;3\;7\;8)(1)(5)$\\
\hline
9&$4$&$9$&$\{[3,3,2,1],[3,2,2,2]\}$&$(1\;2\;3)(4\;5\;6)(7\;8)(9)$&$(2\;1\;9)(4\;7)(6\;5)(3\;8)$&$(2\;8\;4\;6\;7\;3\;9)(1)(5)$\\
\hline
10&$4$&$9$&$\{[3,2,2,2,],[3,2,2,2]\}$&$(1\;2\;3)(4\;5)(6\;7)(8\;9)$&$(5\;4\;6)(3\;2)(1\;8)(7\;9)$&$(1\;3\;8\;7\;5\;6\;9)(2)(4)$\\
\hline
11&$4$&$11$&$\{[3,3,3,2],[3,3,2,1,1,1]\}$&$(1\; 2\; 3)(4\;5\;6)(7\;8\;9)(10\;11)$&$(5\;4\;1)(9\;8\;6)(2\;10)(3)(11)(7)$&$(1\;10\;11\;2\;3\;5\;9\;7\;6)(4)(8)$\\
\hline
12&$4$&$11$&$\{[3,3,3,2],[3,2,2,2,1,1]\}$&$(1\; 2\; 3)(4\;5\;6)(7\;8\;9)(10\;11)$&$(3\;4\;7)(2\;1)(6\;5)(9\;10)(8)(11)$&$(2\;4\;6\;7\;8\;10\;11\;9\;3)(1)(5)$\\
\hline
13&$4$&$11$&$\{[3,3,3,2],[3,3,3,2]\}$&$(1\; 2\; 3)(4\;5\;6)(7\;8\;9)(10\;11)$&$(3\;2\;4)(6\;5\;7)(8\;9\;10)(1\;11)$&$(1\;4\;7\;9\;6\;3\;11\;8\;10)(2)(5)$\\
\hline
14&$5$&$11$&$\{[3,2,2,2,2],[3,2,2,2,2]\}$&$(1\; 2\; 3)(4\;5)(6\;7)(8\;9)(10\;11)$&$(5\;4\;6)(2\;3)(1\;8)(9\;10)(7\;11)$&$(1\;3\;8\;10\;7\;5\;6\;11\;9)(2)(4)$\\
\hline
15&$5$&$11$&$\{[3,2,2,2,2],[3,3,3,1,1]\}$&$(1\; 2\; 3)(4\;5)(6\;7)(8\;9)(10\;11)$&$(5\;4\;6)(9\;8\;10)(1\;7\;11)(2)(3)$&$(1\;2\;3\;7\;5\;6\;11\;9\;10)(4)(8)$\\
\hline
16&$5$&$11$&$\{[3,2,2,2,2],[3,3,2,2,1]\}$&$(1\; 2\; 3)(4\;5)(6\;7)(8\;9)(10\;11)$&$(5\;4\;6)(9\;8\;10)(11\;1)(2\;7)(3)$&$(1\;7\;5\;6\;2\;3\;11\;9\;10)(4)(8)$\\
\hline
17&$5$&$11$&$\{[3,3,3,1,1],[3,3,3,1,1]\}$&$(1\; 2\; 3)(4\;5\;6)(7\;8\;9)(10)(11)$&$(3\;2\;4)(6\;5\;7)(9\;10\;11)(1)(8)$&$(1\;4\;7\;8\;10\;11\;9\;6\;3)(2)(5)$\\
\hline
18&$5$&$11$&$\{[3,3,3,1,1],[3,3,2,2,1]\}$&$(1\; 2\; 3)(4\;5\;6)(7\;8\;9)(10)(11)$&$(3\;2\;4)(6\;5\;7)(9\;10)(1\;11)(8)$&$(1\;4\;7\;8\;10\;9\;6\;3\;11)(2)(5)$\\
\hline
19&$5$&$11$&$\{[3,3,2,2,1],[3,3,2,2,1]\}$&$(1\; 2\; 3)(4\;5\;6)(7\;8)(9\;10)(11)$&$(3\;2\;4)(8\;7\;6)(9\;11)(1\;10)(5)$&$(1\;4\;5\;8\;6\;3\;10\;11\;9)(2)(7)$\\
\hline
\end{tabular}
\end{scriptsize}

Departament of Mathematics

University Federal  of S\~ao Carlos

Rod. Washington Luis, Km. 235. C.P 676 - 13565-905 S\~ao Carlos, SP - Brazil

nbedoya@dm.ufscar.br

\vspace{0.25cm}

Departament of Mathematics 

Institute of Mathematics and Statistic

University of  S\~ao Paulo

Rua do Mat\~ao 1010, CEP 05508-090, S\~ao Paulo, SP, Brazil.

dlgoncal@ime.usp.br

\vspace{0.25cm}

Department of Mathematics and Mechanics,

Moscow State University

Moscow 119992, Russia

ekudr@gmx.de

\end{document}